\theoremstyle{plain}
\newtheorem{prop}{Proposition}
\newtheorem{thrm}[prop]{Theorem}
\newtheorem{lem}[prop]{Lemma}
\newtheorem{conj}[prop]{Conjecture}
\theoremstyle{definition}
\newtheorem{defn}[prop]{Definition}
\newtheorem{rem}[prop]{Remark}
\newtheorem{ex}[prop]{Example}
\def\Q{{\mathbb Q}}
\def\cL{{\mathcal{L}}}
\def\cM{{\mathcal{M}}}
\def\Z{{\mathbb Z}}
\def\fM{{\mathfrak M}}
\def\C{{\mathbb C}}
\def\cO{{\mathcal O}}
\def\G{{\mathrm G}}
\def\GL{{\mathrm{GL}}}
\def\Maass{{\mathrm{Maass}}}
\def\MW{{\mathrm{Miyawaki}}}
\def\O{{\mathrm O}}
\def\SL{{\mathrm{SL}}}
\def\SO{{\mathrm{SO}}}
\def\SU{{\mathrm{SU}}}
\def\Sp{{\mathrm{Sp}}}
\def\spin{{\mathrm{spin}}}
\def\Spin{{\mathrm{Spin}}}
\def\U{{\mathrm{U}}}
\def\R{{\mathbb R}}
\def\cN{{\mathcal N}}
\def\Eis{{\mathrm{Eis}}}
\def\KE{{\mathrm{K}}}
\numberwithin{equation}{section}
\begin{document}
\title{Hermitian modular forms and algebraic modular forms on $\mathrm{SO}(6)$}
\author{Tomoyoshi Ibukiyama}

\address{Department of Mathematics, Graduate School of Science, Osaka University, Machikaneyama 1-1, Toyonaka, Osaka, 560-0043 Japan}

\email{ibukiyam@math.sci.osaka-u.ac.jp}

\author{Brandon Williams}

\address{Institut f\"ur Mathematik, Universit\"at Heidelberg, 69120 Heidelberg, Germany}

\email{bwilliams@mathi.uni-heidelberg.de}

\subjclass[2020]{11F55}

\date{\today}

\maketitle

\begin{abstract}We state conjectures that relate Hermitian modular forms of degree two and algebraic modular forms for the compact group $\SO(6)$. We provide evidence for these conjectures in the form of dimension formulas and explicit computations of eigenforms.
\end{abstract}

\section{Introduction}

In this paper, we conjecture a correspondence between Hermitian 
modular forms of degree two and algebraic modular forms for the spin groups of certain quadratic forms in $6$ variables. \\

Whenever linear algebraic groups $G$ and $G'$ over $\Q$ satisfy
$G\otimes_{\Q} \C\cong G'\otimes_{\Q}\C$, the Langlands 
philosophy predicts the existence of a correspondence between 
automorphic forms for $G$ and $G'$ that is equivariant with respect to the actions of their Hecke algebras. The most familiar example of this sort of correspondence is between classical modular forms and certain harmonic polynomials in three or four variables, which can be regarded as algebraic modular forms for the group $\SU(2)$. This is a classical
result of Eichler \cite{eichler}, which was later generalized by Shimuzu \cite{shimizu} to the case of Hilbert modular forms and by Jacquet--Langlands \cite{jacquetlanglands} to the more general setting of automorphic 
representations of $GL(2)$.
For $\Sp(2,\R)$ and its compact twist $\Sp(2)$, 
there is also a correspondence of this type between paramodular forms and algebraic modular forms for $\Sp(2)$. This was first conjectured by Ibukiyama \cite{ibuparamodular}, \cite{ibuvect} and Ibukiyama--Kitayama \cite{ibukiyamakitayama},
and has now been proved by van Hoften 
\cite{hoften} and R\"{o}sner-Weissauer \cite{roesnerweissauer}.\\

Our paper is a continuation of this line of thought.
Our concern is not general automorphic representations; instead, we aim to give a correspondence between concretely described spaces of automorphic forms.
Since the compact twist of $\SU(2,2)$ is $\SU(4)$, it would be natural to
consider algebraic modular forms with respect to 
automorphism groups of hermitian lattices of rank $4$. Here we instead consider algebraic modular forms with respect to 
automorphism groups of rank six lattices, because we have the classical 
isogeny $\SU(4)/\{\pm 1_4\}\cong \SO(6)$ (such that $\SU(4)$ becomes the Spin group $\Spin(6)$). The other instances mentioned above have the similar descriptions $\SU(2)/\{\pm 1_2\}\cong \SO(3)$, 
$\SU(2)\times \SU(2)/\{\pm(1_2,1_2)\}\cong \SO(4)$, 
$\Sp(2)/\{\pm 1_2\}\cong \SO(5)$. \\

Algebraic modular forms for spin groups and orthogonal groups appear similar but they are actually quite different. For example, for certain ternary lattices the class numbers of $\mathrm{SO}(3)$ over $\Q$ are type numbers of quaternion algebras (i.e. numbers of isomorphism classes of maximal orders) while for $\Spin(3)$ they are class numbers of maximal orders.
Generally, if we denote by $\Gamma_e(Q)$ the 
even Clifford group of a quadratic form $Q$ over $\Q$, then 
we have $\Gamma_e(Q)/\Q^{\times}\cong \SO(Q)$, and 
if we denote by $\Spin(Q)$ the subgroup of elements in $\Gamma_e(Q)$ of 
spinor norm $1$, then the image of $\Spin(Q)$ in $\SO(Q)$ is much 
smaller than $\SO(Q)$ in general.
To pick out the spinor norm $1$ part of $\SO(Q)$, we have to work with spinor characters. This has been done for $\SO(n)$ with $n \le 5$, for example in \cite{dummigan}, \cite{RT}, \cite{tornaria}. \\

Algebraic modular forms for $\SO(6)$ that are associated to spherical representations can be described explicitly by harmonic polynomials that are invariant under the action of a certain group. In particular, these forms can always be computed directly. On the other hand, for small discriminants, the ring structure of Hermitian modular forms has been determined completely \cite{DK1}, \cite{DK2}, \cite{N}, \cite{W}. This makes it possible to compare dimensions and also to compare Hecke eigenvalues and Euler factors of $L$-functions at small primes. \\

We have carried out these comparisons experimentally and the object of this paper is to report on them. Hermitian modular forms for all discriminants contain a common Hecke algebra consisting of \emph{Miyawaki lifts} of pairs of cusp forms from $\mathrm{SL}_2(\mathbb{Z})$ that were constructed by Atobe-Kojima 
\cite{AK2018} using Ikeda \cite{Ikeda2008}. We conjecture that cuspidal Hermitian eigenforms that are \emph{not} Miyawaki lifts are in one-to-one correspondence with algebraic modular forms that are not \emph{Yoshida lifts}. Moreover, this correspondence respects $L$-functions. \\

The content of our paper is roughly 
as follows.
In Section 1, we review the definitions of Hermitian modular forms of degree two, the two types of lifts, their $L$-functions and the calculation of Hecke eigenvalues, the Atkin-Lehner theory, the volume of the fundamental domain, and the dimension formulas for spaces of modular forms in the few (small discriminant) known cases.
In Section 2, we review algebraic 
modular forms for $\SO(6)$, the spinor characters, and Hecke theory.
In Section 3, we describe exactly the modular groups to be considered and we make an explicit conjecture on the correspondence between Hermitian modular forms and algebraic modular forms for $\SO(6)$.
In Section 4, we calculate the Minkowski--Siegel volume of the genus of senary lattices we are considering and see that this matches the main term in the dimension formula for Hermitian modular forms. In Section 5, we compare the dimensions of Hermitian modular forms and  algebraic modular forms for small discriminants for all (scalar valued) weights, which can be viewed as evidence for our conjecture.
In Appendix A, we prove the dimension formula for Hermitian modular forms for $\mathbb{Q}(\sqrt{-2})$.
In Appendix B, we give tables of dimensions of the spaces Hermitian and algebraic modular forms, including their decomposition into lifts.
In Appendix C, we show that Hermitian modular forms and algebraic modular forms of low weight correspond for a larger range of imaginary quadratic fields. \\

\textbf{Acknowledgments.}
The first author was supported by JSPS KAKENHI Grant Number JP23K03031 and JP20H00115.
We thank Eran Assaf and John Voight for helpful discussions about algebraic modular forms, and Aloys Krieg for explaining the Hecke theory of Hermitian modular forms in great detail.

\section{Hermitian modular forms of degree two}

This section is meant to set up some background from the classical theory of modular forms for the group $\mathrm{SU}(2, 2)$ in terms of holomorphic functions on a tube domain, with emphasis on Hecke operators and eigenforms. None of the results here should be considered new.

\subsection{Hermitian modular forms}

For $n \in \mathbb{N}$, the \textbf{Hermitian upper half-space} of degree $n$ is the space of $(n \times n)$ complex matrices $Z$ with the property that, when we write $$Z = X + i Y \quad \text{with} \; X = \overline{X^T}, \; Y = \overline{Y^T},$$ the matrix $Y$ is positive-definite. Note that neither $X$ nor $Y$ is generally real. \\

The unitary group $\mathrm{SU}(n, n)$ will always consist of $(2n \times 2n)$ matrices $M$ for which $\mathrm{det}(M) = 1$ and $$M^T J \overline{M} = J,$$ where we fix the Hermitian form $J$ given by the block matrix $J = i \cdot \begin{pmatrix} 0 & -I \\ I & 0 \end{pmatrix}$.

For any order $\mathcal{O}$ in an imaginary-quadratic field $K$, let $\mathrm{SU}(n, n; \mathcal{O})$ be the subgroup of $\mathrm{SU}(n, n)$ of matrices with entries in $\mathcal{O}$.
In the case that $\mathcal{O}=\mathcal{O}_K$ is the maximal order, we simply write $\Gamma_K := \mathrm{SU}(n,n; \mathcal{O}_K)$.
The discriminant of $K$ is denoted by $\Delta_K$. Note that $\mathrm{SU}(n, n; \mathcal{O}_K) = \mathrm{U}(n, n; \mathcal{O}_K)$ unless $\Delta_K \in \{-3, -4\}$, see e.g. Remark 1.14 of \cite{Wernz2019}.

\begin{defn}
    A \textbf{Hermitian modular form} of weight $k \in 
    \mathbb{N}_0$ is a holomorphic function $f : \mathbf{H}_n \rightarrow \mathbb{C}$ with the property $$f\Big( (aZ+b)(cZ+d)^{-1} \Big) = \mathrm{det}(cZ+d)^k f(Z)$$ for every block matrix $\begin{pmatrix} a & b \\ c & d \end{pmatrix} \in \Gamma_K.$
\end{defn}
In the case $n = 1$, one additionally requires a growth condition at cusps. (This is essentially the same as modular forms for $\mathrm{SL}_2(\mathbb{Z})$.) We omit it because we are only interested in degree $n = 2$.

The invariance of a Hermitian modular form $f$ under translations $$Z \mapsto Z+b, \quad \text{where} \; b = \overline{b^T} \in \mathcal{O}_K^{n \times n}$$ implies that $f$ has a Fourier decomposition $$f(Z) = \sum_{c \in \Lambda_n} \alpha_f(c) e^{2\pi i \mathrm{tr}(cZ)},$$ where $\Lambda_n$ is the lattice $$\Lambda_n = \{\text{Hermitian matrices} \; c \; \text{such that} \; \mathrm{tr}(cb) \in \mathbb{Z} \; \text{for all} \; b \in \mathcal{O}_K^{n \times n}, \; b=\overline{b^T}\}.$$
In the special case of degree two, we usually write the Fourier series in the form $$f\Big( \begin{pmatrix} \tau & z_1 \\ z_2 & \omega \end{pmatrix} \Big) = \sum_{a, b, c} \alpha_f(a, b, c) q^a r^b s^c,$$ where $a, c \in \mathbb{N}_0$ (by the Koecher principle) and $b \in \mathcal{O}_K'$ \text{(=the inverse of the different)}, and where $$q = e^{2\pi i \tau}, \; s = e^{2\pi i \omega}, \; \text{and} \; r^b = e^{2\pi i (b z_1 + \overline{b} z_2)}.$$

The transpose defines an involution $Z \mapsto Z^T$ on $\mathbf{H}$. Modular forms that satisfy $$f(Z^T) = (-1)^k f(Z)$$ will be called symmetric, while modular forms that satisfy $$f(Z^T) = (-1)^{k+1} f(Z)$$ will be called skew-symmetric.

\subsection{Hecke operators}
Suppose $K = \mathbb{Q}(\sqrt{\Delta_K})$ where $\Delta_K < 0$ is a fundamental discriminant. We will describe the local Hecke algebras $\mathcal{H}_p$ at primes $p \nmid \Delta_K$. (See Proposition 4.6 and Proposition 4.7 of \cite{Klosin2014}, as well as \cite{HWK2020}.)

(1) Suppose $p$ is inert in $K$. Then the local Hecke algebra is generated by two double-coset operators $$T_p = \Gamma_K \begin{pmatrix} 1 & 0 & 0 & 0 \\ 0 & 1 & 0 & 0 \\ 0 & 0 & p & 0 \\ 0 & 0 & 0 & p \end{pmatrix} \Gamma_K \quad\text{and} \quad T_{p^2} = T(1, p, p^2, p) = \Gamma_K \begin{pmatrix} 1 & 0 & 0 & 0 \\ 0 & p & 0 & 0 \\ 0 & 0 & p^2 & 0 \\ 0 & 0 & 0 & p \end{pmatrix} \Gamma_K.$$

(2) Suppose $(p) = \mathfrak{p} \cdot \overline{\mathfrak{p}}$ splits in $K$ and that $\mathfrak{p} = (\pi)$ is principal. Then the local Hecke algebra has three generators, $T_p$, $T_{\mathfrak{p}}$, $T_{\overline{\mathfrak{p}}}$, where $$T_p = \Gamma_K \begin{pmatrix} 1 & 0 & 0 & 0 \\ 0 & 1 & 0 & 0 \\ 0 & 0 & p & 0 \\ 0 & 0 & 0 & p \end{pmatrix} \Gamma_K \quad \text{and} \quad T_{\mathfrak{p}} = T(1, \pi, p, \pi) = \Gamma_K \begin{pmatrix} 1 & 0 & 0 & 0 \\ 0 & \pi & 0 & 0 \\ 0 & 0 & p & 0 \\ 0 & 0 & 0 & \pi \end{pmatrix} \Gamma_K,$$ and where $T_{\overline{\mathfrak{p}}}$ is defined similarly to $T_{\mathfrak{p}}$ with $\pi$ replaced by $\overline{\pi}$. \\
(The classical setup of Hermitian modular forms is not the natural setting to discuss Hecke operators at non-principal ideals. However it may be worth mentioning that, by applying Chebotarev's density theorem to the Hilbert class field of $K$, it can be shown that there are infinitely many primes that split as $p = \pi \overline{\pi}$ with $\pi \in \mathcal{O}_K$.)\\

The $L$-function $L(f; s)$ of a Hermitian eigenform $f$ we need is the degree six zeta function, which is associated to the exterior square of the standard representation of $\mathrm{GL}_4$. The Euler factors of $L(f; s)$ were computed by Hina--Sugano \cite{HS1983}, Gritsenko \cite{G1986} and Sugano \cite{sugano} in terms of the Hecke eigenvalues of $f$. For the normalization of the Hecke operators used to define the eigenvalues below, see the next section. \\

Let $p$ be a good prime ($p \nmid \Delta_K$).

(1) Suppose $p$ splits as $p = \mathfrak{p} \overline{\mathfrak{p}}$. Let $\lambda_{\mathfrak{p}}, \lambda_{\overline{\mathfrak{p}}}$ be the eigenvalues of $f$ under $T_{\mathfrak{p}}$ ($= T(1, \pi, p, \pi)$) and  $T_{\overline{\mathfrak{p}}}$ and let $\lambda_p$ be the eigenvalue of $f$ under $T_p$. Then the Euler factor at $p$ is \begin{align*} L_p(f; s) &= 1 - \lambda_p X+ \Big( p^{k - 3} \lambda_{\mathfrak{p}} \lambda_{\overline{\mathfrak{p}}} - p^{2k - 4}\Big) X^2 - \Big( p^{2k - 5} (\lambda_{\mathfrak{p}}^2 + \lambda_{\overline{\mathfrak{p}}}^2) - 2p^{2k - 4} \lambda_p \Big) X^3 \\ &\quad + \Big( p^{3k - 7} \lambda_{\mathfrak{p}} \lambda_{\overline{\mathfrak{p}}} - p^{4k - 8}\Big) X^4 - p^{4k - 8} \lambda_p X^5 + p^{6k - 12} X^6, \end{align*} where $X = p^{-s}.$

(2) If $p$ is inert, then let $\lambda_p$ and $\lambda_{p^2}$ be the eigenvalues of $f$ under $T_p$ and $T_{p^2}$ ($=T(1,p,p^2,p)$). Then the Euler factor is $$L_p(f; s) = (1 - p^{2k - 4} X^2) \cdot \Big[ 1 - \lambda_p X + \Big(p^{k-3} \lambda_{p^2} + p^{2k - 7} (p^3 + p^2 - p + 1) \Big) X^2 - p^{2k - 4} \lambda_p X^3 + p^{4k - 8} X^4 \Big],$$ where again $X = p^{-s}.$

The Euler factors at ramified primes will not be used. For $K = \mathbb{Q}(i)$ (and therefore $p=2$) they were given by Gritsenko \cite{G1986}, and for general $K$ they were computed by Sugano \cite{sugano}.\\

The completed $L$-function was defined in \cite{G1986} for $K = \mathbb{Q}(i)$, and the definition over other fields is analogous. If $f$ has weight $k$, then the completed $L$-function is
$$\Lambda(f; s) = (2\pi)^{-3s} |\Delta_K|^{s/2} \Gamma(s) \Gamma(s-k+1) \Gamma(s-k+2)L(f; s),$$
and its functional equation relates the values at $s$ and $2k-3-s$.

\begin{ex}
In our convention, the Hermitian-Siegel Eisenstein series has zeta function $$L(E_k; s) = L(\chi_K; s-k+2) \zeta(s) \zeta(s - k + 1) \zeta(s-k+2) \zeta(s-k+3) \zeta(s - 2k+4).$$
\end{ex}

\subsection{Formulas for Hecke operators}

All of the double cosets mentioned in the previous section have right-coset decompositions which are represented by block upper-triangular matrices. Here we list (without proof) such representatives. See also \cite{HWK2020}, \cite{Klosin2014}.

The right-coset decompositions yield formulas for the action of Hecke operators on Fourier coefficients, which we used to decompose spaces of Hermitian modular forms into eigenspaces and to compute the degree six $L$-functions of eigenforms. An implementation of these formulas in SageMath has been made available \cite{WeilRep}. \\

\subsubsection{Inert primes} Suppose $p$ is inert in $K$. The right cosets contained in $T_p$ have block upper-triangular representatives of the following four types:
\begin{enumerate}

\item $\displaystyle \begin{pmatrix} I & B \\ 0 & pI \end{pmatrix}, \quad \text{where} \; B \in (\mathcal{O}_K/ p)^{2 \times 2} \; \text{with} \; B = \overline{B^T};$

\item $\displaystyle\begin{pmatrix} pI & 0 \\ 0 & I \end{pmatrix};$

\item $\displaystyle \begin{pmatrix} p & 0 & 0 & 0 \\ -\overline{d} & 1 & 0 & \beta \\ 0 & 0 & 1 & d \\ 0 & 0 & 0 & p \end{pmatrix}, \quad \text{where} \; d \in \mathcal{O}_K / p \; \text{and} \; \beta \in \mathbb{Z}/p;$

\item $\displaystyle M_{\beta, p} = \begin{pmatrix} 1 & 0 & \beta & 0 \\ 0 & p & 0 & 0 \\ 0 & 0 & p & 0 \\ 0 & 0 & 0 & 1 \end{pmatrix}, \quad \beta \in \mathbb{Z}/p.$
\end{enumerate}

This determines the action of $T_p$ on the Fourier coefficients of modular forms. In order to make $T_p$ preserve integrality of Fourier coefficients, we use the normalization $$T_p f(Z) = p^{2k - 4} \sum_M \mathrm{det}(cZ+d)^{-k} f((aZ+b)(cZ+d)^{-1})$$ where $M = \begin{pmatrix} a & b \\ c & d \end{pmatrix}$ runs through the representatives defined above. The result is that if $$f(Z) = \sum_{a,b,c} \alpha_f(a, b, c) q^a r^b s^c,$$ then $T_p f(Z) = \sum_{a, b, c} \beta(a,b,c) q^a r^b s^c$ where \begin{align*} \beta(a, b, c) &= \alpha_f(pa, pb, pc) \\ &+ p^{2k-4} \alpha_f(a/p, b/p, c/p) \\ &+ p^{k-3} \alpha_f(pa, b, c/p) + p^{k-3} \sum_{d \in \mathcal{O}_K / p} \alpha_f \Big( \frac{a + db + \overline{db} + d\overline{d}c}{p}, b + \overline{d}c, pc \Big). \end{align*}

For $T_{p^2} = T(1,p,p^2,p)$, we have block upper-triangular right coset representatives of the following six types: 

\begin{enumerate}

\item $\displaystyle \begin{pmatrix} p^2 & 0 & 0 & 0 \\ 0 & p & 0 & 0 \\ 0 & 0 & 1 & 0 \\ 0 & 0 & 0 & p \end{pmatrix};$

\item $\displaystyle\begin{pmatrix} p & p \alpha & 0 & 0 \\ 0 & p^2 & 0 & 0 \\ 0 & 0 & p & 0 \\ 0 & 0 & -\overline{\alpha} & 1 \end{pmatrix}, \quad \alpha \in \mathcal{O}_K / p;$ 

\item $\displaystyle\begin{pmatrix} 1 & \alpha & \beta+\alpha\overline{\gamma} & \gamma \\ 0 & p & \overline{\gamma} p & 0 \\ 0 & 0 & p^2 & 0 \\ 0 & 0 & -\overline{\alpha} p & p \end{pmatrix}, \quad \text{where} \; \alpha, \gamma \in \mathcal{O}_K / p \; \text{and} \; \beta \in \mathbb{Z}/(p^2);$

\item $\displaystyle \begin{pmatrix} p & 0 & 0 & \gamma p \\ 0 & 1 & \overline{\gamma} & \beta \\ 0 & 0 & p & 0 \\ 0 & 0 & 0 & p^2 \end{pmatrix}, \quad \text{where} \; \gamma \in \mathcal{O}_K / p \; \text{and} \; \beta \in \mathbb{Z}/(p^2);$

\item $\displaystyle \begin{pmatrix} p & 0 & b & 0 \\ 0 & p & 0 & d \\ 0 & 0 & p & 0 \\ 0 & 0 & 0 & p \end{pmatrix}, \quad \text{where} \; b, d \in \mathbb{Z}/p \; \text{with} \; bd=0 \; \text{but} \; (b,d)\ne(0,0);$

\item $\displaystyle \begin{pmatrix} p & 0 & \beta & \gamma \\ 0 & p & \overline{\gamma} & \beta^* |\gamma|^2 \\ 0 & 0 & p & 0 \\ 0 & 0 & 0 & p \end{pmatrix}, \quad \beta \in (\mathbb{Z}/p\mathbb{Z})^{\times}, \; \gamma \in (\mathcal{O}_K / p)^{\times},$ where $\beta^* \beta \equiv 1 \, (p).$
\end{enumerate}

The normalization of the $T_{p^2}$ operator is $$T_{p^2} f(Z) = p^{3k-4} \sum_M \mathrm{det}(cZ + d)^{-k} f((aZ+b)(cZ+d)^{-1}),$$ where $M$ runs through the six coset types listed above. If $f(Z) = \sum_{a, b, c} \alpha_f(a, b, c) q^a r^b s^c$ then we obtain $$T_{p^2} f(Z) = \sum_{a,b,c} \beta(a,b,c) q^a r^b s^c$$ with coefficients
\begin{align*} \beta(a,b,c) &= p^{2k-4} \alpha_f(a / p^2, b/p, c) \\ &+ p^{2k-4} \sum_{\alpha \in \mathcal{O}_K / p} \alpha_f \Big( a, (b - a\overline{\alpha})/p, (c - p\alpha b - p \overline{\alpha b} + a |\alpha|^2) / p^2 \Big) \\ &+\alpha_f(a, pb, p^2 c) \\ &+ \sum_{\alpha \in \mathcal{O}_K / p} \alpha_f \Big( p^2 a, p(b - a \overline{\alpha}), c - b \alpha - \overline{b \alpha} + a |\alpha|^2 \Big) \\ &+ p^{k-4} \Big( \epsilon_p(a, c) + \nu_p(a,b,c) \Big) \alpha_f(a, b, c), \end{align*} where the constants $\epsilon_p$ and $\nu_p$ are defined by $$\epsilon_p(a, c) := \begin{cases} 2p-2: & a \equiv c \equiv 0 \, (\text{mod}\, p); \\ p-2: & a \equiv 0 \; \text{and} \; c \not\equiv 0 \, (\text{mod}\, p); \\ p-2: & a \not \equiv 0 \; \text{and} \; c \equiv 0 \, (\text{mod}\, p); \\ -2: & \text{otherwise}; \end{cases}$$ and $$\nu_p(a, b, c) := 1-p^2 + p \cdot \# \Big\{ \gamma \in (\mathcal{O}_K / p)^{\times}: \; a + \gamma b + \overline{\gamma b} + c |\gamma|^2 \equiv 0 \, (\text{mod}\, p) \Big\}.$$

\subsubsection{Split primes} Suppose $(p) = \mathfrak{p} \mathfrak{\overline{p}}$ is a split prime in $K$ with $p \nmid \Delta_K$ and where $\mathfrak{p} = (\pi)$ is principal. We have the following right-coset representatives for $T_{\mathfrak{p}} = T(1, \pi, p, \pi)$:

\begin{enumerate}
\item $\displaystyle \begin{pmatrix} 1 & \alpha & \beta & \gamma \\ 0 & \pi & \pi \overline{\gamma} & 0 \\ 0 & 0 & p & 0  \\ 0 & 0 & -\pi \overline{\alpha} & \pi \end{pmatrix}, \quad \alpha, \gamma \in \mathcal{O}_K / \pi, \; \beta \in \mathbb{Z}/p,$ where the representatives $\alpha, \gamma \in \mathcal{O}_K / \pi$ are chosen such that $\alpha \overline{\gamma} \in \mathbb{Z}$; 

\item $\displaystyle \begin{pmatrix} \pi & 0 & 0 & \pi \overline{\gamma} \\ 0 & 1 & \gamma & \beta \\ 0 & 0 & \pi & 0 \\ 0 & 0 & 0 & p \end{pmatrix}, \quad \gamma \in \mathcal{O}_K / \pi, \; \beta \in \mathbb{Z}/p;$ 

\item $\displaystyle \begin{pmatrix} p & 0 & 0 & 0 \\ -\pi \overline{\alpha} & \pi & 0 & 0 \\ 0 & 0 & 1 & \alpha \\ 0 & 0 & 0 & \pi \end{pmatrix}, \quad \alpha \in \mathcal{O}_K / \pi;$

\item $\displaystyle \begin{pmatrix} \pi & 0 & 0 & 0 \\ 0 & p & 0 & 0 \\ 0 & 0 & \pi & 0 \\ 0 & 0 & 0 & 1 \end{pmatrix}.$ 

\end{enumerate}

We normalize the action on modular forms as $$T_{\mathfrak{p}} f(Z) = \pi^k p^{k-2} \sum_M \mathrm{det}(cZ+d)^{-k} f((aZ+b)(cZ+d)^{-1}).$$ 
Then the action on Fourier coefficients is as follows: if $f(Z) = \sum_{a, b, c} \alpha_f(a, b, c) q^a r^b s^c$, then $$T_{\mathfrak{p}} f(Z) = \sum_{a, b, c} \beta(a, b, c) q^a r^b s^c$$ where \begin{align*} \beta(a, b, c) &= \alpha_f(a, \overline{\pi} b, pc) \\ &+ \sum_{\alpha \in \mathcal{O}_K / \pi} \alpha_f \Big( pa, \pi(b - a \overline{\alpha}), c - \alpha b - \overline{\alpha b} + a |\alpha|^2 \Big) \\ &+ p^{k-2} \alpha_f(a, b/\pi, c/p) \\ &+ p^{k-2} \sum_{\alpha \in \mathcal{O}_K / \pi} \alpha_f \Big( \frac{a + \alpha b \pi + \overline{\alpha b \pi} - c |\alpha|^2}{p}, \frac{b + c \overline{\alpha}}{\overline{\pi}}, c \Big). \end{align*}

For the $T_p$-operator, we have the same four types of right coset representatives as in the case of inert primes, but also two additional types of right cosets:

\begin{enumerate}
\setcounter{enumi}{4}
\item $\displaystyle \begin{pmatrix} 1 & 0 & \beta & 0 \\ 0 & p & 0 & 0 \\ 0 & 0 & p & 0 \\ 0 & 0 & 0 & 1 \end{pmatrix} \begin{pmatrix} 1 & \overline{d} & 0 & 0 \\ 0 & 1 & 0 & 0 \\ 0 & 0 & 1 & 0 \\ 0 & 0 & d & 1 \end{pmatrix}$;
\item $\displaystyle \begin{pmatrix} 1 & 0 & \beta & 0 \\ 0 & p & 0 & 0 \\ 0 & 0 & p & 0 \\ 0 & 0 & 0 & 1 \end{pmatrix} \begin{pmatrix} 1 & \overline{d} & 0 & 0 \\ 1 & \overline{d} + 1 & 0 & 0 \\ 0 & 0 & 1 & 1 \\ 0 & 0 & d & d+1 \end{pmatrix}$.
\end{enumerate}

In both (5) and (6), $\beta$ runs through $\mathbb{Z}/p\mathbb{Z}$ and $d$ represents certain classes in $\mathcal{O}_K / p$: In (5), we require $d \overline{d} \in p\mathbb{Z}$ but $d \notin p \mathcal{O}_K$.
In (6), we require both $d \overline{d} \in p\mathbb{Z}$ and $d+\overline{d} \in p\mathbb{Z}$ but $d \notin p \mathcal{O}_K$.

The $T_p$-operator has the same normalization factor $p^{2k-4}$ that it does when $p$ is inert. If $$f(Z) = \sum_{a,b,c} \alpha_f(a, b, c) q^a r^b s^c,$$ then $T_p f(Z) = \sum_{a, b, c} \beta(a,b,c) q^a r^b s^c$ where 

\begin{align*} \beta(a, b, c) &= \alpha_f(pa, pb, pc) \\ &+ p^{2k-4} \alpha_f(a/p, b/p, c/p) \\ &+ p^{k-3} \alpha_f(pa, b, c/p) + p^{k-3} \sum_{d \in \mathcal{O}_K / p} \alpha_f \Big( \frac{a + db + \overline{db} + d\overline{d}c}{p}, b + \overline{d}c, pc \Big) \\ &+ p^{k-3} \sum_{\substack{d \in \mathcal{O}_K / p \\ d \notin p \mathcal{O}_K \\ d \overline{d} = 0 \, \text{mod}\, p}} \alpha_f \Big( pa, b + da, \frac{c + b\overline{d} + \overline{b} d + d \overline{d}a}{p} \Big) \\ &+ p^{k-3} \sum_{\substack{d \in \mathcal{O}_K / p \\ d \notin p \mathcal{O}_K \\ d \overline{d} = 0 \, \text{mod}\, p \\ d + \overline{d} = 0 \, \text{mod}\, p}} \alpha_f(a', b', c'), \end{align*} where in the last line we set \begin{align*} a' &= a + b + \overline{b} + c, \\ b' &=  d(a+b+\overline{b}+c)+b+c, \\ c' &= \frac{c(1+d)(1+\overline{d}) + b \overline{d} + \overline{b}d + (a+b+\overline{b})d \overline{d}}{p}.\end{align*}

\subsection{Liftings of elliptic modular forms}

The Hermitian modular group of degree two admits two kinds of liftings from elliptic modular forms. \\

The first is the Gritsenko--Maass lift. This is the degree $n=2$ case of the general Ikeda lift for Hermitian modular forms \cite{Ikeda2008}.

The Gritsenko--Maass lift takes Jacobi cusp forms $f$ of weight $k$ whose index is the lattice of integers $\mathcal{O}_K$, with quadratic form given by the norm $N_{K/\mathbb{Q}}$, and lifts them to Hermitian modular forms $M_f$ of the same weight. The span of the image is called the \textbf{Sugano Maass space} $$\mathrm{Maass}_k(\Gamma_K) \subseteq S_k(\Gamma_K).$$
The Maass lift maps Hecke eigenforms to Hecke eigenforms. The Euler factors of the degree six zeta function were computed by Gritsenko \cite{Gritsenko1990} in the case of the field $\mathbb{Q}(i)$, but the proof carries over to any imaginary-quadratic field, and the Euler factors in more generality were obtained by Sugano (\cite{Sugano1995}, Theorem 8.1).

Hecke operators on Jacobi forms of lattice index are defined similarly to scalar index; for details, we refer to \cite{Ajouz} or
\cite{Sugano1995}. For a prime $p$, the operator $T_p$ is simply the sum of the action of the double coset of $\begin{pmatrix} p^{-1} & 0 \\ 0 & p \end{pmatrix}$ and multiplication by $p^{k-3} \chi_K(p)$. If $f$ is a Jacobi eigenform of weight $k$ and lattice index $\mathcal{O}_K$, with eigenvalues $\lambda_p$ under the operators $T_p$ as defined in \cite{Ajouz}, 
then the Euler factor of $L(M_f; s)$ at a prime $p \nmid |\Delta_K|$  is \begin{align*} &\quad L_p(M_f; s) \\ &= (1 - p^{k-s-3})(1 - p^{k-s-2})(1 - p^{k-s-1}) \cdot \Big( 1 - \lambda_p p^{-s} + \chi_K(p) \lambda_p p^{k-2-2s} - \chi_K(p) p^{3k-6-3s} \Big).\end{align*}

Note that if $k$ is even and $q = -\Delta_K$ is a prime, then $f$ corresponds naturally to a pair of conjugate modular forms $$g, \overline{g} \in S_{k-1}(\Gamma_0(q), \chi_K)$$ under the map defined in \cite{BB2003}, and that $g, \overline{g}$ are eigenforms of all Hecke operators $T_n$ with $\left( \frac{n}{q} \right) = 1$ and their $p^2$-eigenvalues are $\lambda_p$. The factors $$1 - \lambda_p p^{-s} + \chi_K(p) \lambda_p p^{k-2-2s} - \chi_K(p) p^{3k-6-3s}$$ in $L_p(M_f; s)$ are exactly the Euler factors in the symmetric square $L$-function attached to $g$ by Theorem 5.1.6 of \cite{Ajouz}. Hence (up to primes dividing $\Delta_K$) we have $$L(M_f; s) = \zeta(s-k+1) \zeta(s-k+2) \zeta(s-k+3) L(\mathrm{Sym}^2 g; s).$$ Jacobi forms of odd weight have a somewhat more complicated interpretation in terms of modular forms of level $q^2$ instead of $q$ \cite{SW2019}. \\

The second type of lifting is the Miyawaki lift, which is obtained by pulling back an Ikeda lift and integrating against another modular form. The Miyawaki lift for Hermitian modular forms was defined in general in \cite{AK2018}.

In the case of Hermitian modular forms of degree two, the Miyawaki lift begins with two cuspidal eigenforms $f$ and $g$ for $\mathrm{SL}_2(\mathbb{Z})$ of weights $k$ and $k + 2$, respectively. By Ikeda's construction \cite{Ikeda2008}, the form $f$ lifts to a Hermitian modular form $F$ of degree $3$ and weight $k + 2$. Then the Miyawaki lift is defined by the integral $$\mathcal{F}_{f, g}(Z) := \int_{\mathrm{SL}_2(\mathbb{Z}) \backslash \mathbb{H}} F \Big( \begin{pmatrix} Z & 0 \\ 0 & w \end{pmatrix} \Big) 
 \overline{g(w)} v^k \, \frac{\mathrm{du} \, \mathrm{d}v}{v^2}, \quad w = u+iv.$$

It was shown in \cite{AK2018} that $\mathcal{F}_{f, g}$ is an eigenform of the Hecke operators on $\U(2, 2)$, and its standard $L$-function is computed there as well. The degree six $L$-function is not given explicitly in \cite{AK2018}, but from numerical examples it is  clear that the degree six $L$-function is $$L(\mathcal{F}_{f, g}; s) = \zeta_K(s - k + 2) \cdot L(f \otimes g; s),$$ and presumably this can be derived from the work of \cite{AK2018}. Here $\zeta_K$ is the Dedekind zeta function and $L(f \otimes g; s)$ is the Rankin--Selberg product $L$-function. 
In other words, if $p \nmid \Delta_K$ and if the Euler factors of $L(f; s)$ and $L(g; s)$ at $p$ are factored $$L_p(f; s)^{-1} = (1 - \alpha_f(p) p^{-s})(1 - \beta_f(p) p^{-s}), \quad L_p(g; s)^{-1} = (1 - \alpha_g(p) p^{-s})(1 - \beta_g(p) p^{-s}),$$ then the Euler factor of $L(
\mathcal{F}_{f,g}; s)$ at $p$ is 
 \begin{align*} L_p(
 \mathcal{F}_{f,g}; s)^{-1} &= (1 - p^{-s+k-2}) (1 - \chi_K(p) p^{-s+k-2}) \\ & \times (1 - \alpha_f(p) \alpha_g(p) p^{-s}) (1 - \alpha_f(p) \beta_g(p) p^{-s}) (1 - \beta_f(p) \alpha_g(p) p^{-s}) (1 - \beta_f(p) \beta_g(p) p^{-s}). \end{align*}

\subsection{Atkin--Lehner theory}\label{subs:AL}

The maximal discrete extension of $\Gamma_K$ was described by Krieg, Raum and Wernz \cite{kriegraumwernz} in terms of Atkin--Lehner involutions $W_p$. 

Let $d$ be a divisor of $\Delta_K$ with $4 \nmid d$, and set $m = -\Delta_K$ (if $\Delta_K$ is odd) or $m = -\Delta_K / 4$ (if $\Delta_K$ is even). Then $d$ and $\frac{m(m+1)}{d}$ are coprime integers and we choose $u, v \in \mathbb{Z}$ such that $ud - v \frac{m(m+1)}{d} = 1$. With $$V_d := \frac{1}{\sqrt{d}} \begin{pmatrix} ud & v (m + \sqrt{-m}) \\ m - \sqrt{-m} & d \end{pmatrix} \in \mathrm{SL}_2(\mathbb{C})$$ the Atkin--Lehner involution is defined as the matrix $$W_d := \begin{pmatrix} \overline{V_d^T} & 0 \\ 0 & V_d^{-1} \end{pmatrix} \in \mathrm{SU}(2, 2; \mathbb{C}).$$ This is independent of $u, v$ up to multiplication by $\Gamma_K$.

It is shown in \cite{kriegraumwernz} that the operators $W_d$ normalize $\Gamma_K$ and generate the maximal discrete extension $$\Gamma_K^* = \langle \Gamma_K, W_d: \, 4 \nmid d | \Delta_K \rangle,$$ and that $\Gamma_K^* / \Gamma_K \cong (\mathbb{Z}/2\mathbb{Z})^{t}$ where $t$ is the number of prime divisors of $\Delta_K$. \\

The Atkin--Lehner involutions commute with all Hecke operators and therefore any Hermitian eigenform is also an eigenform of all $W_d$ with eigenvalue $\pm 1$. Wernz \cite{Wernz2020} proved that the subspace of the Sugano Maass space of forms with sign $+1$ under all Atkin--Lehner involutions is exactly the Maass space that was defined by Krieg \cite{K1991} in terms of relations satisfied by Fourier coefficients.

\subsection{Dimensions}

The dimensions of modular forms for the full modular group $\Gamma_K$ are known only in a few cases. But one can approximate the dimension in terms of the volume of a fundamental domain for $\Gamma_K$ (cf. \cite{Freh}, Hauptsatz II.3.2). If $K$ has discriminant $\Delta_K$ and character $\chi_K = n \mapsto \left( \frac{\Delta_K}{n}\right)$, then a fundamental domain for $\mathrm{U}(2, 2; \mathcal{O}_K)$ has volume

\begin{align}\label{eq:volume} \mathrm{vol}(\mathcal{F}) &= \frac{1}{4\pi^5}|\Delta_K|^{5/2} \zeta(2) L(3, \chi_K) \zeta(4) \nonumber \\ &= \frac{\pi}{2160} |\Delta_K|^{5/2} L(3, \chi_K) \nonumber \\ &= \frac{\pi^4}{3240} B_{3, \chi_K}. \end{align} 
In the last line, $B_{n,\chi_K}$ is a Bernoulli number defined (cf. \cite{bernoulli}) by the generating function 
\[
\sum_{a=1}^{D}\frac{\chi(a)te^{at}}{e^{Dt}-1}=\sum_{n=1}^{\infty}B_{n,\chi_K}\frac{t^n}{n!}.
\]
In particular,
\[
B_{3,\chi_K}=\frac{1}{D}\sum_{a=1}^{D}\chi(a)a^3
-\frac{3}{2}\sum_{a=1}^{D}\chi(a)a^2+\frac{D}{2}
\sum_{a=1}^{D}\chi(a)a
\] where $D = -\Delta_K$.

\begin{table}[ht]
\caption{Quadratic Bernoulli numbers $B_{3, \chi_K}$}
\begin{tabular}{|l|l|l|l|l|l|l|l|l|l|l|l|l|}
\hline
$|\Delta_K|$      & $3$   & $4$   & $7$    & $8$ & $11$ & $15$ & $19$ & $20$ & $23$  & $24$ & $31$ & $35$ \\ \hline
$B_{3, \chi_K}$ & $2/3$ & $3/2$ & $48/7$ & $9$ & $18$ & $48$ & $66$ & $90$ & $144$ & $138$ & $288$ & $324$ \\ \hline
\end{tabular}
\end{table}

This volume formula can equivalently be derived from the well-known isogeny from $\SU(2,2)$ to $\SO(2, 4)$ and the formula of \cite{GHS2007} for the Hirzebruch--Mumford volume of $\O(2, n)$. \\

If $\Delta_K \ne -4$, then the above formula gives the volume of a fundamental domain for $\Gamma_K$. (If $\Delta_K \ne -3, -4$, then $\mathrm{U}(2, 2;\mathcal{O}_K)$ and $\Gamma_K$ coincide; while if $\Delta_K = -3$, then $\mathrm{U}(2, 2;\mathcal{O}_K)$ is given by extending $\Gamma_K$ by the central elements $\omega I$, $\omega \in \{e^{\pi i /3}, e^{2\pi i / 3}\}$ which act trivially on $\mathbf{H}$, so the fundamental domains are the same in this case also.) When $\Delta_K = -4$, we have $[U(2,2;O_K):\Gamma_K]=2$ and the volume formula \eqref{eq:volume} must be multiplied by two. \\

The contributions of the center of $\Gamma_K$ to the Selberg trace formula determine the asymptotic growth of $\mathrm{dim}\, M_k(\Gamma_K)$ as $k$ becomes large: \begin{equation} \label{eq:asymptotic} \mathrm{dim}\, M_k(\Gamma_K) \sim \frac{k^4}{64\pi^4} \cdot \mathrm{vol}(\mathcal{F}) \sim k^4 \cdot \frac{B_{3, \chi_K}}{207360}.\end{equation}
In the exceptional case $K = \mathbb{Q}(i)$, we have to multiply \eqref{eq:asymptotic} by two, and for even $k$ we obtain $$\mathrm{dim}\, M_k(\Gamma_{\mathbb{Q}(i)}) \sim k^4 \cdot \frac{2 B_{3, \chi_{\mathbb{Q}(i)}}}{207360} = \frac{k^4}{69120}$$ while for odd $k$ there are no modular forms at all.\\

There are a few cases where the dimensions are known completely because the structure of the underlying graded algebra of modular forms has been worked out. We summarize these cases below:

\begin{thrm} The dimensions of Hermitian modular forms of degree two for discriminants $\Delta = -3, -4, -7, -8, -11$ have the following generating series. \\
(i) For $\Delta = -3$, $$\sum_{k=0}^{\infty} \mathrm{dim} \, M_k(\Gamma_K) X^k = \frac{1+X^{45}}{(1 - X^4)(1 - X^6)(1 - X^9)(1 - X^{10})(1 - X^{12})}.$$

(ii) For $\Delta = -4$, $$\sum_{k=0}^{\infty} \mathrm{dim} \, M_k(\Gamma_K) X^k = \frac{(1+X^{10})(1+X^{34})}{(1 - X^4)(1 - X^6)(1 - X^8)(1 - X^{10})(1 - X^{12})}.$$

(iii) When $\Delta = -7$,
\[
\sum_{k=0}^{\infty} \mathrm{dim}\, M_k(\Gamma_K) X^k = \frac{P(X)}{(1 - X^4)(1  - X^6)(1 - X^{10})(1 -X^{12})(1-X^{14})}
\]
with the polynomial 

\begin{align*} P(X) &= 1 + X^8 + X^{10} + 2X^{16} + 2X^{18} + X^{24} + X^{26} + 2X^{32} + 2X^{34} + X^{40} + X^{42} + X^{50} \\ &+ X^7 + X^9 + X^{11} + X^{15} + X^{17} + X^{19} + X^{23} + X^{25} + X^{27} + X^{31} + X^{33} + 2X^{35} \\ &\quad\quad + 2X^{39} + X^{41} + X^{43} - X^{49}.
\end{align*}

(iv) When $\Delta= -8$, 

$$\sum_{k=0}^{\infty} \mathrm{dim}\, M_k(\Gamma_K) X^k = \frac{P(X)}{(1 - X^2)(1 - X^6)(1 - X^8)(1 - X^{10})(1 - X^{12})}$$ with the polynomial \begin{align*}P(X) &= 1 - X^2 + X^4 + X^8 + X^{12} + X^{30} + X^{34} + X^{38} - X^{40} + X^{42} \\ & + X^9 + X^{15} + X^{23} - X^{25} + 2X^{27} - X^{29} + X^{31} + X^{33}  + X^{35} - X^{37} + X^{39} - X^{41}.\end{align*}

(v) When $\Delta = -11$, $$\sum_{k=0}^{\infty} \mathrm{dim}\, M_k(\Gamma_K) X^k = \frac{P(X)}{(1 - X^4)(1 - X^6)^2 (1 - X^{10})(1 - X^{12})}$$ with the polynomial 
\begin{align*} P(X) &= 1 + 2X^8 + 2X^{10} + X^{12} + X^{16} + X^{18} + X^{24} + X^{26} + X^{28} + X^{30} + 2X^{32} + 2X^{34} + X^{42} \\ &+ X^5 + X^7 + 2X^9 + X^{11} + X^{13} + X^{15} + X^{17} + X^{19} + X^{23} + X^{27} + X^{29} \\ &\quad\quad + 2X^{31} + 2X^{33} + 2X^{35} + X^{37} - X^{41}.
\end{align*}

\end{thrm}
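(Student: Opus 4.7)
The theorem is a compilation: the generating series in parts (i), (ii), (iii), (v) follow from complete determinations of the graded ring $\bigoplus_k M_k(\Gamma_K)$ in the references \cite{DK1}, \cite{DK2}, \cite{N}, \cite{W} cited in the introduction, and the plan for those four cases is simply to record the minimal presentation of each ring as a quotient of a weighted polynomial ring and read off the Hilbert series. The denominator factors encode the weights of algebra generators, while the terms of $P(X)$ encode relations and skew-symmetric generators.

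The new content is part (iv), for $K = \mathbb{Q}(\sqrt{-2})$, which is proved in Appendix A. The plan is structurally parallel to the known cases: construct an explicit collection of modular forms for $\Gamma_K$ via standard tools (Hermitian Eisenstein series of even weight $\geq 4$, Maass lifts of Jacobi cusp forms of index the Hermitian lattice $\mathcal{O}_K$, theta series of rank-two Hermitian lattices, and Borcherds products pulled back from an orthogonal model), verify algebraic independence of a chosen subset by a Fourier-coefficient computation to identify the polynomial subring reflected in the denominator, and then prove that these forms together with a few further generators accounting for $P(X)$ exhaust the ring.

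The main obstacle is the generation statement. A convenient approach is to pass to the orthogonal model via the isogeny $\mathrm{SU}(2,2) \cong \mathrm{Spin}(2,4)$, where one can either invoke Borcherds's obstruction theory to control the weights at which new generators can appear, or use Fourier--Jacobi expansions together with the known dimensions of Jacobi forms on the relevant Hermitian lattice to bound $\dim M_k(\Gamma_K)$ above and match it against the subring spanned by the proposed generators. Once generation is established, the Hilbert series is pinned down by computing Fourier coefficients up to a weight past which the denominator's periodicity takes over, and the leading-order growth is cross-checked against the volume asymptotic \eqref{eq:asymptotic} with $B_{3,\chi_K}=9$.
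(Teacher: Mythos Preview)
Your attribution of parts (i), (ii), (iii), (v) to the cited literature is correct and matches the paper's own proof (with the minor caveat that \cite{DK2} is the reference for $\Delta=-8$, not for the other four cases, and that for $\Delta=-4$ one also needs Aoki \cite{A} for the skew-symmetric part).

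For part (iv), however, your plan diverges from the paper's. You propose to complete the Dern--Krieg program: exhibit explicit generators, prove algebraic independence, and then establish generation (possibly via Fourier--Jacobi bounds). But this is exactly what \cite{DK2} left unfinished --- they found generators but not the ideal of relations --- and you do not indicate how you would determine the relations. The paper's Appendix A \emph{does not} determine the ring structure at all. Instead it computes the Hilbert series directly, as follows: the isometry $\mathcal{O}_K \cong A_1 \oplus A_1(2)$ lets one identify the leading Fourier--Jacobi coefficient of any $F \in M_k(\Gamma_K)$ with a product $\Delta^N \psi_1 \psi_2$ of classical weak Jacobi forms of indices $N$ and $2N$; conversely, because the lattice satisfies the $\mathrm{Norm}_2$ condition of \cite{WW2024}, \emph{every} such product arises as a leading term (via products of Gritsenko--Maass lifts in the symmetric case, and Borcherds products in the skew case). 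The Hilbert series then drops out of a residue computation:
\[
\sum_k \dim M_k^{\mathrm{sym}}(\Gamma_K)\,t^k \;=\; \frac{(1-t^4)(1-t^6)}{2\pi i}\oint \Bigl(\sum_{a,m}\dim J_{a,m}^{\mathrm{weak,even}}\,t^a x^{2m}\Bigr)\Bigl(\sum_{b,n}\dim J_{b,n}^{\mathrm{weak}}\,t^{b+6n} x^{-n}\Bigr)\frac{dx}{x},
\]
and similarly for the skew part. This bypasses the relations entirely. Your secondary suggestion (Fourier--Jacobi bounds) is in the right neighborhood, but you frame it as a tool to confirm a proposed ring presentation rather than as the primary engine; the key ingredients you are missing are the splitting $\mathcal{O}_K \cong A_1 \oplus A_1(2)$, the surjectivity of the leading-term map, and the contour-integral packaging.
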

\begin{proof}
The result for $D = -3$ was proved by Dern and Krieg \cite{DK1}. For $D = -4$, the dimensions of symmetric modular forms were described by Nagaoka \cite{N} and Ibukiyama \cite{I} and antisymmetric modular forms by Aoki \cite{A}; see also \cite{DK1}. For discriminants $D = -7$ and $D = -11$ the dimensions were calculated by Williams \cite{W}. Finally, for $D = -8$, Dern and Krieg \cite{DK2} determined generators for both symmetric modular forms, and skew-symmetric modular forms over it, but they do not obtain the ideal of relations or the dimensions. A dimension formula in this case is derived in Appendix A below using Jacobi forms. \qedhere
\end{proof}

\begin{rem}
 In a power series of the form $$f(t) = \sum_{k=0}^{\infty} c_k t^k = \frac{P(t)}{(1 - t^{a_1})(1 - t^{a_2})(1 - t^{a_3})(1 - t^{a_4})(1 - t^{a_5})}$$ where $a_1,...,a_5 \in \mathbb{N}$ are coprime, the growth of the coefficients $c_k$ is dominated by the behavior of $f$ near $t=1$: the partial fractions expansion has the form 
 $$f(t) = \frac{p_0(t)}{(1 - t)^5} + \sum_i \frac{p_i(t)}{(1 - \zeta_i t)^{n_i}}$$ for certain polynomials $p_i(t)$, where $\zeta_i$ are roots of unity and $n_i \le 4$. Considering the limit $\lim_{t\rightarrow 1}(1-t)^5f(t)$ shows that $p_0(1)=P(1)/a_1a_2a_3a_4a_5$. 
 Then apply the geometric series: the terms $\frac{p_i(t)}{(1 - \zeta_i t)^{n_i}}$ are negligible, and $$c_k \sim \frac{P(1)}{a_1 a_2 a_3 a_4 a_5} \binom{k+4}{4} \sim \frac{P(1)}{24 a_1 a_2 a_3 a_4 a_5} k^4, \quad k \rightarrow \infty.$$

  This is a quick sanity check for the main term in the dimension formula in any case where the full Hilbert series is known, and it can be applied to the Hilbert series indicated above. For example, with $P(X) = 1+X^{45}$ and $(a_1,...,a_5) = (4,6,9,10,12)$ we obtain $$\mathrm{dim}\, M_k(\Gamma_{\mathbb{Q}(\sqrt{-3})}) \sim \frac{k^4}{311040} = k^4 \cdot \frac{2/3}{207360}.$$ With $P(X) = (1+X^5)(1 + X^{17})$ and $(a_1,...,a_5) = (2,3,4,5,6)$ we obtain $$\mathrm{dim}\, M_{2k}(\Gamma_{\mathbb{Q}(i)}) \sim \frac{k^4}{4320} = \frac{(2k)^4}{69120}.$$
  \end{rem}

\section{Algebraic modular forms}

\subsection{Algebraic modular forms on \texorpdfstring{$\SO$}{SO}}
We recall the definition of algebraic modular forms with respect to the 
special orthogonal group.
Let $V$ be a finite-dimensional 
vector space over $\Q$ with a 
positive-definite quadratic form $Q$. 
The algebraic group $\SO(V)$ is defined over 
$\Q$ by
\[
\SO(V)=\{g\in \SL(V): Q(gv)=Q(v) \text{ for all }v\in V\}.
\]
The adelization $\SO(V)_A$ and 
$v$-component $\SO(V)_v$ for any place $v$ of $\Q$ are defined
as usual. Here $\SO(V)_{\infty}$ is the compact 
orthogonal group $\SO(m)$ where $m=\dim V$. \\

For simplicity, we assume from now on that $m\geq 3$.

Fix a finite-dimensional 
rational irreducible representation 
$(\rho,W)$ of $\SO(m)$ and extend it to a representation of $\SO(V)_A$ by 
$$\rho:\SO(V)_A\rightarrow \SO(V)_{\infty}=\SO(m)
\rightarrow \GL(W).$$
In addition, fix a lattice $L$ of $V$ 
(i.e. a free $\Z$-module containing a basis of $V$ 
over $\Q$).
For a finite place $v$ of $\Q$, define 
$L_v=L\otimes_{\Z}\Z_v$. 
We let $\SO(V)$ act on $V$ from the right. For 
$g = (g_v)_v \in \SO(V)_A$, we define a lattice $Lg$ of $V$ 
by 
\[
Lg=\cap_{v<\infty}(L_vg_v\cap V).
\]
and we define a subgroup of $\SO(V)_A$ by 
\[
\U(L)=\{g\in \SO(V)_A: Lg=L\}.
\]
Classically, automorphic forms with respect to 
$\SU(2)$ (or $SO(3)$) are called Brandt matrices and are used for comparison between elliptic modular forms and certain harmonic polynomials. 
The more general definition for algebraic groups whose 
infinite part is compact up to the center was 
first given by Ihara \cite{ihara} in the symplectic case, and the basic theory of Hecke operators and the 
trace formula was given by Hashimoto \cite{hashimoto}.
The more general formulation is due to Gross \cite{gross}, who also introduced the name ``algebraic modular forms".

By definition, algebraic modular forms 
for $\SO(V)$ of weight $\rho$ 
with respect to $U(L)$ are elements of 
the following space:
\begin{align*}
{\mathfrak M}_{\rho}(L)=\{&f:\SO(V)_A\rightarrow W: \\ &
f(uga)=\rho(u)f(g) \text{ for any }u\in \U(L), g\in \SO(V)_A, a\in \SO(V)\}.
\end{align*}
This can be written more concretely as follows.
Let $g_i$ be representatives of the double coset decomposition
\[
\SO(V)_A=\coprod_{i=1}^{h}\U(L)g_i \SO(V)
\]
and put $\Gamma_i=g_i^{-1}\U(L)g_i\cap \SO(V)$.
This is a finite group, equal to 
the subgroup of $\SO(V)$ preserving $L_i=Lg_i$.
Denote by $W^{\Gamma_i}$ the space of vectors in $W$ 
fixed by $\Gamma_i$. Then we have  
\[
{\mathfrak M}_{\rho}(\cL)\cong \oplus_{i=1}^{h}W^{\Gamma_i},
\]
where $h$ is the class number of the genus $\mathcal{L}$ containing $L$.
Note that $[\O(V):\SO(V)]=2$ and 
$\O(L_i)$ is not generally $\SO(L_i)$.
In that case, $\O(L_i)$ acts on $W^{\Gamma_i}$
as $\{\pm 1\}$ and we have 
\[
W^{\Gamma_i}=W^{\O(L_i)}\oplus W^{\O(L_i),det}
\]
where $W^{\O(L_i),det}$ is the space on which 
any $g \in \O(L_i)$ with $\det(g)=-1$ acts as 
$-1$. 

In order to treat algebraic modular forms coming from the Spin group using only the group $\SO(V)$, 
one has to generalize the definition of algebraic modular forms to allow other characters.
For any $\{\pm 1\}$-valued character 
$\chi$ of $\SO(V)$, we write 
\[
\mathfrak{M}_{\rho}(\cL,\chi)=\oplus_{i=1}^{h}
W^{\Gamma_i,\chi},
\]
where we mean 
\[
W^{\Gamma_i,\chi}=\{w\in W; \rho(\gamma_i)w=\chi(\gamma_i)w 
\text{ for all }\gamma_i\in \Gamma_i\}.
\]
The adelic meaning of this definition will 
be explained later.
In particular, assume that $\rho$ is the spherical 
representation of degree $\nu$ of $\SO(m)$ corresponding to the dominant integral weight  $(\nu,0,\ldots,0)$ (i.e. the Young diagram 
in which the first row is of length $\nu$ and the other rows are empty.) Then $W$ is nothing but the space of homogeneous harmonic polynomials 
in $m$ variables of degree $\nu$. 
For applications to Hermitian modular forms, we will only consider the case $\mathrm{dim}(V) = 6$.
Then $-1_6\in \Gamma_i$ for every $i$, so a harmonic polynomial $P(x)=P(x_1,x_2,x_3,x_4,x_5,x_6)$ that occurs in a modular form of weight $\nu$ with respect to 
$\Gamma_i$ must satisfy $P(-x)=P(x)$ and in particular $P=0$ unless 
$\nu$ is even. Harmonic polynomials of odd degree $\nu$ satisfy
$P(-x)=-P(x)$, which can occur in modular forms in $\mathfrak{M}_{\nu}(\cL,\chi)$ for a character $\chi$ for which $\chi(-1_6)=-1$. This is often the case for spin characters, which we define later on.
\\

We denote by $C_e(Q)$ the even Clifford algebra of $Q$ and define the even Clifford group $\Gamma_e(Q)$ by 
\[
\Gamma_e(Q)=\{g\in C_e(Q)^{\times}:\; gVg^{-1}=V\}.
\]

The Clifford algebra $C(Q)$ 
has a natural antiautomorphism $J$ and the norm of $g\in \Gamma_e(Q)$ is defined as $N(g)= g \cdot J(g)$.
For $g\in \Gamma_e(Q)$, we have 
$N(g)\in \Q^{\times}$ and 
$N(\Gamma_e(Q))=\Q^{\times}_+$. It is well known that $\Gamma_e(Q)/\Q^{\times}\cong \SO(V)$ (see \cite{kneser}). \\

The spinor group is defined as 
\[
\Spin(Q)=\{g\in \Gamma_e(Q):N(g)=1\}.
\]
The spinor norm of the pullback of an element $g\in \SO(V)$ is determined up to $((\Q)^{\times})^2$ and this $(\Q^{\times})^2$-coset is also called the spinor norm $N(g)$ of $g$.
For a quadratic lattice $L\subset V$, denote by 
$d(L)$ the discriminant of $L$ (i.e. $\det((e_i,e_j)$ for a basis $\{e_1,\ldots,e_n\}$ 
of $L$), and denote by $\SO(L)$ the group of orientation-preserving automorphisms
of $L$ that respect the quadratic form $Q$.
By Chapter 10, 
Lemma 4.1 of \cite{cassels}, locally at any prime $q$, the spinor norms of
elements of the automorphism group of a
unimodular lattice are $q$-adic units modulo squares. 
So the (squarefree part of the) 
spinor norm of an element of $SO(L)$ for a global lattice $L$ 
is a product of prime divisors of $d(L)$. 

To extract the spinor norm one part from $\SO(L)$
for a lattice $L$ with discriminant $|\Delta_K|$, 
the spinor characters of $\SO(V)$ are defined as follows. Call the fundamental 
discriminant of a quadratic field over $\Q$ a prime discriminant if it is 
divisible only by one prime.
Then the discriminant $\Delta_K$ of $K$ splits as a product of distinct prime discriminants $d_i$, say
$$\Delta_K=d_1\cdots d_r.$$ For any product $d_0$ of the $d_i$ (i.e. any fundamental 
discriminant $d_0|\Delta_K$, but including $d_0=1$), we define $\nu_{d_0}$ as the character of $\Q^{\times}_+/(\Q^{\times})^2$ 
for which, for a prime $p$, we have 
\[
\nu_{d_0}(p)=\left\{\begin{array}{ll}
-1 & \text{ if } p|d_0 \\
1 & \text{ otherwise} 
\end{array}\right.
\]
For each $a\in \SO(V)$, define the spinor norm $N(a)$
of $a$ as usual and define 
$$\spin_{d_0}(a)=\nu_{d_0}(N(a)).$$
This character can be naturally extended to $\SO(V)_A$ by acting trivially on $\SO(V)_{\infty}$ and $\SO(V)_p$ for $p\nmid \Delta_K$.
So we put 
\[
U_0(L)=\{u=(u_p)\in U(L):\, \spin_{d_0}(u_p)=1 
\text{ for all } d_0|\Delta_K\}.
\]
Then $[U(L):U_0(L)]=2^t$, where $t$ is the number of prime
divisors of $\Delta_K$. 
We consider algebraic modular forms of weight $\nu$ with 
respect to $U_0(L)$ and denote the space of these forms by 
$\fM_{\nu}(\Spin(\cL_K))$.
For any $u\in U(L)$ and any $f \in \fM_{\nu}(\Spin(\cL_K))$, we define a map $T(u)f$ of $SO(V)_A$ to 
$W$ by 
\[
(T(u)f)(g)=f(u^{-1}g).
\]
It is easy to show that $T(u)f \in \fM_{\nu}(\Spin(\cL_K))$. Since $u^2\in U_0(L)$, we have $T(u)^2f=T(u^2)f=f(u^{-2}g)=f(g)$, so $T(u)^2=id$ as 
a linear transformation of $\fM(\Spin(\cL_K))$.
Since $U(L)/U_0(L)$ is an elementary abelian 2-group, 
$T(u)$ is simultaneously diagonalizable. 
So if we label the eigenspace with respect to $\spin_{d_0}$ by 
\[
\fM(\cL_K,spin_{d_0})=\{f\in \fM_{\nu}(\Spin(\cL_K));f(u^{-1}g)=spin_{d_0}(u)f\},
\]
then we have a common eigenspace decomposition
\[
\fM_{\nu}(\Spin(\cL_K))=\bigoplus_{\begin{subarray}{c}
d_0|\Delta_K \\ d_0;disc\end{subarray}}\fM(\cL_K,\spin_{d_0}).
\]
Given a double coset decomposition 
$\SO(V)_A=\coprod_{i=1}^{h}U(L)g_iSO(V)$,
we can define a mapping
\[
\fM_{\nu}(\cL_K,\spin_{d_0})\rightarrow 
\bigoplus_{i=1}^{d}W^{\Gamma_i,\spin_{d_0}}
\qquad 
\text{ by }
f\rightarrow (\rho(g_i)^{-1}f(g_i))_{1\leq i\leq h}.
\]
It is easy to see that this is an isomorphism. 
So we have 
\[
\fM_{\nu}(\Spin(\cL_K))=\oplus_{i=1}^{h}W^{\Gamma_i,\spin_{d_0}}
\]
This justifies the definition above.
Note that it can happen that 
$W^{\Gamma_i,\spin_{d_0}}=0$.

The space 
$\fM_{\nu}(\Spin(\cL_K))$
is what we will eventually want to compare with
the space of Hermitian modular forms of weight $\nu+4$.
A suitable subspace  of this should correspond 
with Hermitian modular forms of the maximal discrete extension of $\Gamma_K$ as defined in \cite{kriegraumwernz}.

Spinor characters can be calculated as 
follows. For any vector $x \in V \backslash \{0\}$, the
reflection $\tau_x\in O(V)$ with respect to $x$ is defined by 
\[
\tau_x(y)=y-\frac{B(x,y)}{Q(x)} x.
\]
Any element of $g\in \SO(V)$ can be written 
as a product of an even number of reflections, $g=\tau_{x_1}\cdots \tau_{x_{2r}}$ with
respect to vectors $x_i\in V$. 
An algorithm to calculate such a decomposition 
is given in \cite{cassels}.
Then $$N(g)=Q(x_1)\cdots Q(x_{2r}) \in \mathbb{Q}^{\times} / (\mathbb{Q}^{\times})^2$$
is the spinor norm of $g$. One can show in particular that the spinor norm of $-1_6$ is 
$-\Delta_K$ modulo $(\Q^{\times})^2$ (See \cite{kitaoka} p. 30, Exercise 1.) 
This implies for example that if $\Delta_K=-p$ for an odd prime $p$, 
then $\spin_p(-1_6)=-1$ and therefore $\fM_{\nu}(\cL,\spin_p)=0$ if $\nu$ is even.
In particular, if $\Delta_K$ is a prime discriminant, then we have 
\[
\fM_0(\Spin(\cL_K))=\fM_0(\cL_K)
\]
This is not true for general discriminants.

\subsection{Hecke operators on algebraic modular forms}

As before, let $L \subseteq V$ be an integral lattice of rank $m$ and let $W = W_{\nu}$ be the representation space of harmonic polynomials of homogeneous degree $\nu$.

Integral lattices $L$ and $M$ contained in a common ambient space are called \emph{Kneser $p^k$-neighbors} ($p$ prime, $k \ge 1$) if $L \cap M$ has index $p^k$ in both $L$ and $M$ and if $$L / (L \cap M) \cong M / (L \cap M) \cong (\mathbb{Z}/p \mathbb{Z})^k$$ are $p$-elementary. Equivalently, if there is a basis $e_1,...e_k, f_1,...,f_k, g_1,...,g_n$ of $L_p = L \otimes \mathbb{Z}_p$ with $$\langle e_i, f_j \rangle = \delta_{ij} \quad \text{and} \quad \langle e_i, e_j \rangle = \langle f_i, f_j \rangle = \langle e_i, g_j \rangle = \langle f_i, g_j \rangle = 0,$$ such that \begin{equation} \label{eq:matM} p^{-1} e_1,...,p^{-1} e_k, p f_1,...,p f_k, g_1,...,g_n \end{equation} is a basis of $M_p = M \otimes \mathbb{Z}_p$.

$p^k$-neighbors belong to the same genus and are clearly equivalent (even in the narrow sense) at all primes $\ell \ne p$.
The $p^k$-neighbor relation is interesting only when $k \le \frac{1}{2} \mathrm{rank}(L)$, and for $k = \frac{1}{2}\mathrm{rank}(L)$ only when $L_p$ is split. \\

For any prime $p$ and any $k \le \frac{1}{2} \mathrm{rank}(L),$ consider the double coset decomposition $$U\alpha_{p, k} U = \bigcup_{\gamma} \gamma U.$$ 
Then for any $g \in \mathrm{SO}(V)_A$, the lattices $L (\gamma g)$ are precisely the Kneser $p^k$-neighbors of the lattice $Lg$.

The Hecke operator $T_{p, k}$ is defined on $\mathfrak{M}_{\rho}(L)$ by $$T_{p, k} f(g) = \sum_{\gamma} \rho(\gamma) f(\gamma^{-1} g)$$ 
(See \cite{hashimoto} p. 230 (3)). By construction it is independent of the choice of $\gamma$, and for $u \in U$ and $a \in \mathrm{SO}(V)$ we have $T_{p, k} f(uga) = \rho(u) \cdot T_{p, k}(g)$.

In practice, we compute the action of $T_{p, k}$ at a prime $p \nmid \mathrm{det}(L)$ as follows. Suppose we fix representatives $L_i$ of the genus $\mathcal{L}$ such that $L = L_1$, together with fixed $p$-adic isometries $\beta_i : L_p \stackrel{\sim}{\rightarrow} (L_i)_p$. Elements $f \in \mathfrak{M}_{\nu}(V, \chi)$ are then represented by sequences of polynomials $(P_i)$ of homogeneous degree $\nu$, with each $P_i$ harmonic with respect to $L_i$. 

To apply $T_{p, k}$ to $f$, we have to compute the $p^k$-neighbors $M$ of $L_i$ (following \cite{GreenbergVoight}, which amounts to computing $k$-dimensional isotropic subspaces of $L$ over $\mathbb{F}_p$), together with a change-of-basis matrix $\gamma : L_i \rightarrow M$ corresponding to a choice of bases as in (\ref{eq:matM}). For each neighbor $M$ we find a global isometry $\alpha_j : M \stackrel{\sim}{\rightarrow} L_j$. Then $\beta_j^{-1}\alpha_j\gamma\beta_i$ belongs to $\mathrm{O}(L)$, and $P_i(\alpha_j \gamma X)$ is harmonic with respect to $L_j$, and $T_{p, k} f$ is represented by the harmonic polynomials $$\tilde P_j(X) = \sum_i \sum_{\substack{p^k\text{-neighbors} M \text{of} L_i \\ M \cong L_j}} \chi(\beta_j^{-1} \alpha_j \gamma \beta_i) P_i \Big( \alpha_j \gamma X \Big).$$

The computationally difficult parts of this process are testing lattices for isometry (and producing explicit isometries), which we carried out using the algorithm of Plesken and Souvignier \cite{PS97} provided by SageMath, and letting those isometries act on the defining polynomials, which becomes quite tedious when the degree is large.
The procedure to compute the $p^k$-neighbors is described in detail in Chapter 5 of Hein's thesis \cite{Hein}.

\subsection{L-functions for orthogonal modular forms}

The Euler factors in the standard $L$-function of an eigenform on $\mathrm{SO}(6)$ at good primes were computed by Murphy (\cite{Murphy}, Section 3.3) by means of the Satake transform. We have to modify Murphy's result slightly to get the algebraic normalization of the $L$-function. Let $\lambda_{p, k}$ be the eigenvalue of $f \in \mathfrak{M}_{\nu}(V, \chi)$ under $T_{p, k}$. Then $L^{\mathrm{std}}(f, s) = \prod_p L_p(f; p^{-s})$, where:

(i) If $L$ is split at $p$, then \begin{align*} L_p(f; X) &= 1 - \lambda_{p, 1} X + p^{\nu + 1} (p^{\nu} + p^{\nu+1} + p^{\nu + 2} + \lambda_{p, 2}) X^2 - p^{2\nu + 3} (2 \lambda_{p, 1} + \lambda_{p, 3}) X^3 \\ &\quad + p^{3\nu + 5} (p^{\nu} + p^{\nu+1} + p^{\nu+2} + \lambda_{p, 2}) X^4 - p^{4\nu + 8} X^5 + p^{6\nu + 12} X^6.\end{align*}

(ii) If $L$ is not split at $p$, then
\begin{align*}
    L_p(f; X) &= (1 - p^{2\nu + 4} X^2) \\ &\times (1 - \lambda_{p, 1} X + p^{\nu+1} (p^{\nu} - p^{\nu+1} + p^{\nu+2} + p^{\nu + 3} + \lambda_{p, 2})X^2 - p^{2\nu + 4} \lambda_{p, 1} X^3 + p^{4\nu + 8} X^4).
\end{align*}

\subsection{The theta map}

The \emph{theta map} is a function that takes algebraic modular forms on $\SO_n$ as input and yields classical modular forms (for subgroups of $\SL_2(\mathbb{Z})$) that are sums of theta functions.

An algebraic modular form $f$ of weight $\nu$ is represented by a sequence of polynomials $(P_i)$ of homogeneous degree $\nu$, each harmonic with respect to some (fixed) representative $L_i$ of the narrow equivalence classes in a given genus of rank $n$. The discriminant forms $\mathcal{D} = L_i'/L_i$ can be identified with one another under $\mathrm{SO}(V)$. It is therefore natural to consider the (vector-valued) theta functions, $$\theta_{P_i}(z) = \sum_{\lambda \in L_i'} P_i(\lambda) e^{\pi i \langle \lambda, \lambda \rangle z} e_{\lambda + L_i},$$ where $e_{\gamma}$ is the natural basis of the group ring $\mathbb{C}[\mathcal{D}]$. Since $P_i$ is harmonic, this is a modular form of weight $\nu + n/2$ with respect to the Weil representation of $\mathcal{D}$. Moreover, for any automorphism $\gamma \in \mathrm{O}(\mathcal{D})$ of the discriminant form, the function $$\gamma^* \theta_{P_i}(z) = \sum_{\lambda \in L_i'} P_i(\lambda) e^{\pi i \langle \lambda, \lambda \rangle z} e_{\gamma(\lambda + L_i)}$$ is again a modular form. The vector-valued setting allows us to define theta functions for polynomials of odd degree. \\

 The \emph{theta map} is then defined by $$\theta(f) := \frac{1}{\#\O(\mathcal{D})} \sum_i \frac{1}{\#\SO(L_i)} \sum_{\gamma \in \mathrm{O}(L_i'/L_i)} \gamma^* \theta_{P_i}.$$

For example, by the Siegel--Weil formula the image of the constant function $1$ under the theta map is the vector-valued Eisenstein series of weight $n/2$. See for example the discussion in Section 3.7 of \cite{Opitz}.

It is an important fact that the theta map is equivariant with respect to the Hecke operators on $\SO_n$ and $\SL_2$ at all primes not dividing the discriminant. In particular, the kernel $\ker(\theta)$ is a space of algebraic modular forms that is closed under those Hecke operators. 

For more precise statements about the theta map, we refer to Section 5 of \cite{AFILSV}.

\section{Conjectures}\label{conjecture}

 Our conjectures relate Hermitian modular forms for the full group $\Gamma_K$ with algebraic modular forms on a genus  $\mathcal{L}_K$ of rank six lattices associated to the field $K$. \\

Recall (by Nikulin, \cite{N1980}, Corollary 1.9.4) that the genus of an even integral lattice $L$ is uniquely determined by the signature $(r, s)$ and the discriminant form $(L'/L, q)$.

For any imaginary-quadratic field $K$, there is a natural discriminant form: view $\mathcal{O}_K$ as an even integral lattice with respect to the norm $N_{K/\mathbb{Q}}$. Let $\mathcal{L}_K$ be the genus of positive-definite even lattices $L$ of rank six with discriminant form $(L'/L, q) \cong (\mathcal{O}_K'/\mathcal{O}_K, -N_{K/\mathbb{Q}})$. 
Equivalently (by Nikulin, cf. \cite{N1980}, Theorem 1.12.2), $\mathcal{L}_K$ is the genus of even lattices $L$ that admit an isometry $$L \oplus H \oplus H 
 \cong E_8 \oplus \mathcal{O}_K(-1),$$ where $H$ is the standard hyperbolic plane and $E_8$ is the usual $E_8$-lattice, and where $\mathcal{O}_K(-1)$ is the lattice $\mathcal{O}_K$ with quadratic form $-N_{K/\mathbb{Q}}$.
At any prime $p$, we have 
$E_8 \cong H\oplus H \oplus H\oplus H$ by the 
classification theory of even unimodular lattices
\cite{kitaoka}. So over $\mathbb{Z}_p$, any such lattice $L$ satisfies
\[
L_p\oplus H \oplus H = H\oplus H \oplus (H\oplus H \oplus {\mathcal O}_K(-1)).
\]
By Witt's theorem (\cite{kitaoka} Theorem 5.3.5), we can cancel $H\oplus H$ to obtain the local isometry classes $$L_p = H \oplus H \oplus \mathcal{O}_K(-1)$$ of the genus $\mathcal{L}_K$. \\

Our first conjecture actually predicts the existence of certain distinguished algebraic modular forms on $\mathrm{SO}(6)$ which do not correspond to Hermitian modular forms. We formulate it here only for $D \in \{3, 4, 7, 8, 11\}$, since the precise statement for general discriminant is not yet clear:

\begin{conj}\label{conj:mainconj}
Let $f, g \in S_{\nu + 3}(\mathrm{SL}_2(\mathbb{Z}))$ be cuspidal eigenforms of degree one and the same weight $\nu+3$. (In particular, $\nu$ is odd.) \\
Suppose $D \in \{7, 8, 11\}$ and define an eigenform $h$ of weight 3 and level $D$ as follows:\\
(i) ($D = 7$) Let $h$ be the cusp form of level $7$ and weight $3$ with CM by $\mathbb{Q}(\sqrt{-7})$: $$h(\tau) := \eta^3(\tau) \eta^3(7\tau) = q - 3q^2 + 5q^4 - 7q^7 \pm ...$$
(ii) ($D = 8$) Let $h$ be the cusp form of level $8$ and weight $3$ with CM by $\mathbb{Q}(\sqrt{-8})$: \begin{align*} h(\tau) &:= \eta^2(\tau) \eta(2\tau) \eta(4\tau) \eta^2(8\tau) \\ &= q - 2q^2 - 2q^3 + 4q^4 + 4q^6 \pm ... \end{align*}
(iii) ($D = 11$) Let $h$ be the cusp form of level $11$ and weight $3$ with CM by $\mathbb{Q}(\sqrt{-11})$: $$h(\tau) = q - 5q^3 + 4q^4 - q^5 + 16q^9 \pm ...$$

Then there is an algebraic modular form $\mathrm{Y}_{f, g, h}$ on $\mathrm{SO}(\cL_D)$, of weight $\nu$ and spinor character $\mathrm{spin}_D$ whose standard $L$-function is $$L(\mathrm{Y}_{f, g, h}; s) = L(f \otimes g; s) L(h; s - \nu - 1).$$
\end{conj}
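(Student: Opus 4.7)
The plan is to construct $\mathrm{Y}_{f,g,h}$ as a theta lift induced from the local orthogonal decomposition $L_p \cong H \oplus H \oplus \mathcal{O}_K(-1)_p$ of any lattice $L \in \mathcal{L}_K$ established in Section 3. This splitting provides, at every finite place, an embedding of an anisotropic rank-two factor into $L_p$ complementary to a hyperbolic quaternary factor, and correspondingly an adelic embedding of spherical subgroups $\SO(V_4) \times \SO(V_2) \hookrightarrow \SO(V)$, where $V_2$ carries a quadratic form isometric to $-N_{K/\Q}$ at every place. The conjectural Yoshida lift will be a pullback/induction from this Levi.

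On the $\SO(V_2)$ factor, identifying the norm-one torus of $K$ shows that characters of $\SO(V_2)_{\mathbb{A}}$ correspond to Hecke characters of $K$ trivial on $\mathbb{A}_{\Q}^{\times}$. The weight-$3$ CM form $h$ corresponds, via Hecke's classical theta construction, to a distinguished such character $\psi_h$, whose Satake parameters at a split prime $p = \mathfrak{p}\overline{\mathfrak{p}}$ are $\{\psi_h(\mathfrak{p}), \psi_h(\overline{\mathfrak{p}})\}$. On the complementary $\SO(V_4)$ factor, the exceptional isogeny $\Spin(4) \cong \SL_2 \times \SL_2$ together with the definite Eichler--Jacquet--Langlands correspondence produces, from the pair $(f, g) \in S_{\nu+3}(\SL_2(\Z))^2$, an algebraic modular form $F_{f,g}$ of weight $\nu$ on $\SO(V_4)$. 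One then defines $\mathrm{Y}_{f,g,h}$ by summing the tensor $F_{f,g} \otimes \psi_h$ over $\SO(V)$-orbits of adelic orthogonal splittings of genus representatives $L_i$, or equivalently as a theta lift against the Schwartz function obtained from the product of the two lower-rank theta kernels.

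To verify the $L$-function, at a good prime $p \nmid \Delta_K$ one checks that the neighbor operators $T_{p,k}$ restricted to the span of such induced forms factor through the Levi $\SO(V_4) \times \SO(V_2)$, so the Satake parameters of $\mathrm{Y}_{f,g,h}$ at a split prime are the six numbers
$$\{\, \alpha_f \alpha_g,\ \alpha_f \beta_g,\ \beta_f \alpha_g,\ \beta_f \beta_g,\ \psi_h(\mathfrak{p})\,p^{\nu+1},\ \psi_h(\overline{\mathfrak{p}})\,p^{\nu+1} \,\},$$
with the analogous modification at inert primes. Substituting into the formula of Section 2.3 recovers $L(f \otimes g; s) \cdot L(h; s - \nu - 1)$; the shift by $\nu + 1$ reflects the algebraic normalization of weight-$\nu$ forms on $\SO(6)$ against the classical normalization of the weight-$3$ form $h$. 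The character $\spin_D$ comes out correctly because the outer isometries of the $V_2$-factor (the analogues of reflection through $\mathcal{O}_K$) have spinor norm supported on the primes dividing $\Delta_K$, which is exactly what $\spin_D$ detects.

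The main obstacle is globalizing the local splitting: the genus $\mathcal{L}_K$ need not contain a member that decomposes orthogonally over $\Z$, so the theta kernel cannot be taken as a naive tensor product of lower-rank theta series. Instead the construction must proceed adelically, averaging over double cosets in the style of Yoshida's original paper for $\Sp(4)$, with careful bookkeeping of the class-number contributions to $\fM_{\nu}(\Spin(\mathcal{L}_K))$. A secondary obstacle is proving nonvanishing of the lift and characterizing exactly which triples $(f, g, h)$ give a nonzero form, for which one expects a local-global sign criterion analogous to Yoshida's original nonvanishing theorem.
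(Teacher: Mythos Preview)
The statement you are attempting to prove is labeled a \emph{Conjecture} in the paper, and the paper does not prove it. The authors offer only numerical evidence: for $D=7$ and specific choices of $f,g$ (namely $f=g=\Delta$ at $\nu=9$, and the two weight-$24$ eigenforms at $\nu=21$), they compute Euler factors of the unique algebraic eigenform of the relevant weight and character at small primes and observe that these factor as $L(f\otimes g;s)\,L(h;s-\nu-1)$ predicts. No construction of $\mathrm{Y}_{f,g,h}$ is given.

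Your proposal is a reasonable strategy outline, but by your own account it is not a proof. The two obstacles you name at the end---globalizing the local splitting $L_p\cong H\oplus H\oplus\mathcal{O}_K(-1)$ into an honest adelic theta kernel, and establishing nonvanishing of the resulting lift---are not technicalities; they are the entire content of the conjecture. Concretely, for $D\in\{7,8,11\}$ the genus $\mathcal{L}_K$ has a single class (the root lattices $A_6$, $C_5\oplus A_1$, and one further rank-$6$ lattice), and none of these splits globally as an orthogonal sum of a rank-$4$ and a rank-$2$ piece in the way your construction requires, so the phrase ``summing over $\SO(V)$-orbits of adelic orthogonal splittings of genus representatives'' does not yet define anything. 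Likewise, the claim that the neighbor operators $T_{p,k}$ ``factor through the Levi'' on the span of induced forms presupposes that the induced forms exist and span a Hecke-stable subspace, which is exactly what is in question. Until those steps are actually carried out, what you have written is a heuristic for why the conjecture is plausible, which is also what the paper provides---just by a different route.
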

The notation $\mathrm{Y}_{f, g, h}$ is meant to suggest that $\mathrm{Y}$ is an analogue of the Yoshida lift for the modular forms $h$ and $f \otimes g$.

When $f \ne g$, we experimentally find two distinct Yoshida lifts of the triple $(f, g, h)$ with the same $L$-function. They can be distinguished by passing from $\SO(6)$ to $\O(6)$, however: one lift always has the determinant character, and the other does not.

More generally, if $D = d_1 d_2$ where $d_1 > 0$ is a fundamental discriminant, then our  computations suggest that the Yoshida lift takes Hilbert modular eigenforms $F$ of parallel weight $(\nu+3, \nu+3)$ and level one attached to the real-quadratic field $\mathbb{Q}(\sqrt{d_1})$ and eigenforms $h$ on $\Gamma_1(d_2)$ with CM by $\mathbb{Q}(\sqrt{-d_2})$, and it produces an algebraic modular form of weight $\nu$ whose standard $L$-function is $$L(\mathrm{Y}_{F, h}; s) = L^{\mathrm{Asai}}(F; s) L(h; s - \nu - 1).$$ Here $L^{\mathrm{Asai}}(F; s)$ is Asai's $L$-function \cite{Asai1977}. From this point of view, $f \otimes g$ should be thought of as a ``split" Hilbert modular form with $d_1 = 1$, i.e. as the modular form $f(z_1) g(z_2)$ for $\mathrm{SL}_2(\mathbb{Z}) \times \mathrm{SL}_2(\mathbb{Z})$ and the product $L$-function $L(f \otimes g; s)$ should be viewed as its Asai $L$-function. We will refrain from making a general conjecture as to which Yoshida lifts $\mathrm{Y}_{F, h}$ occur in $S_k(\mathrm{SO}(6))$. \\

\begin{rem} When $d_2 = p$ with a prime $p \equiv 3 \, (4)$, then a result of Bruinier--Bundschuh \cite{BB2003} implies that weight three eigenforms $h$ on $\Gamma_1(d_2)$ with CM by $\mathbb{Q}(\sqrt{-d_2})$ are in bijection with eigenforms of weight three for the Weil representation of $\mathrm{SL}_2(\mathbb{Z})$ attached to any of the rank six lattices $L$ in the genus $\mathcal{L}_K$, hence (via the theta decomposition) in bijection with Jacobi eigenforms of weight six for the lattice $L$. This ``lifting" from CM eigenforms $h$ to Jacobi eigenforms also works for non-prime $d_2$.

It is tempting to guess that such Jacobi forms are actually the underlying objects being lifted, since they are clearly associated with the genus $\mathcal{L}_K$.
\end{rem}

\begin{ex}
    Let $K = \mathbb{Q}(\sqrt{-7})$, with corresponding signature $(6, 0)$ lattice $A_6$. Then $\fM_9(\cM, \spin_7)$ contains a unique rational eigenform and its Euler factors at small primes $p \ne 7$ have the following form: 
    \begin{small} \begin{align*} L_2(X) &= (1 - 2^{11} X)^2 (1 + 3 \cdot 2^{10} X + 2^{22} X^2) (1 + 3520 X + 2^{22} X^2) \\ L_3(X) &= (1 - 3^{11} X)^2 (1 - 3^{22} X^2) (1 + 290790 X + 3^{22} X^2) \\ L_5(X) &= (1 - 5^{11} X)^2 (1 - 5^{22} X^2) (1 + 74327350X + 5^{22} X^2) \\ L_{11}(X) &= (1 - 11^{11} X)^2 (1 + 6 \cdot 11^{10} X + 11^{22} X^2) (1 + 284813350678 X + 11^{22} X^2).\end{align*}\end{small}
These Euler factors exactly match the conjectural lift $\mathrm{Y}_{f, g, h}$ where $f = g = \Delta(\tau)$ and $h(\tau) = \eta^3(\tau) \eta^3(7\tau).$
\end{ex}

\begin{ex}
    Again, let $K = \mathbb{Q}(\sqrt{-7})$. The genus $\mathcal{L}_K$ is represented by the $A_6$ root lattice, whose orthogonal group is the Weyl group $W(A_6)$. The fundamental invariants $p_2,...p_7$ of $W(A_6)$ have degrees $2,3,4,5,6,7$, and their Jacobian determinant $$\Psi = \mathrm{det}\Big( \nabla p_2,..., \nabla p_7 \Big)$$ is a harmonic polynomial that transforms under $W(A_6)$ with the determinant character. Hence it defines an algebraic modular form for $\mathrm{SO}(6)$ of weight $21$ and spinor character $\mathrm{spin}_7$, and also the determinant character if it is viewed as a modular form on $\O(6)$. Since the space of such forms is one-dimensional, it is automatically a rational eigenform. Its Euler factors at small primes $p \ne 7$ factor in the form
    \begin{tiny}
    \begin{align*} L_2(X) &= (1 + 3 \cdot 2^{22} X + 2^{46} X^2) \cdot (1 + 2^{10} \cdot 19989 X + 2^{29} \cdot 395729 X^2 + 2^{56} \cdot 19989 X^3 + 2^{92} X^4) \\ L_3(X) &= (1 - 3^{46} X^2) \cdot (1 + 3^7 \cdot 8696912 X - 3^{27} \cdot 432091286 X^2 + 3^{53} \cdot 8696912 X^3 + 3^{92} X^4)  \\ L_5(X) &= (1 - 5^{46} X^2) \cdot (1 + 5^4 \cdot 5378001278076 X - 5^{25} \cdot 471210985152034 X^2 + 5^{50} \cdot 5378001278076 X^3 + 5^{92} X^4) \\ L_{11}(X) &= (1 + 6 \cdot 11^{22} X + 11^{46} X^2) (1 - 11^2 \cdot 1260169201464060096144 X - 11^{25} \cdot 11253816697881924170134 X^2 \\ &\quad \quad \quad \quad \quad \quad \quad \quad \quad \quad \quad \quad \quad \quad - 11^{48} \cdot 1260169201464060096144 X^3 + 11^{92} X^4). \end{align*}
    \end{tiny}
    These Euler factors exactly match the conjectured Yoshida lift $\mathrm{Y}_{f,g,h}$, where $h(\tau) = \eta^3(\tau) \eta^3(7\tau)$ and where $f$ and $g$ are the distinct eigenforms of weight 24 for $\mathrm{SL}_2(\mathbb{Z})$. \\
    As mentioned above, there is a different algebraic modular eigenform for $\O(6)$ of weight 21 and spin character $\spin_7$, \emph{without} the determinant character, which has exactly the same Euler factors at the primes listed above. This can be calculated (with some difficulty) using \cite{WeilRep}.
\end{ex}

Our main conjecture is that, apart from Miyawaki lifts and Yoshida lifts, eigen cusp forms of weight $k$ on $\mathrm{SU}(2, 2)$ correspond exactly to non-constant eigenforms of weight $k-4$ on $\mathrm{SO}(6)$.

\begin{conj}[Main conjecture] \label{conj:main}
There is a one-to-one correspondence between cuspidal Hermitian eigenforms $F \in S_k(\Gamma_K)$ that are not Miyawaki lifts and nonconstant algebraic eigenforms $G \in \fM_{\nu}(\mathrm{Spin}(\mathcal{L}_D))$, $\nu = k - 4$, that are not Yoshida lifts. It has the following properties: \\ 
(i) The degree six zeta function $L(F; s)$ equals the standard $L$-function $L(G; s)$ (possibly up to Euler factors at primes $p | \Delta_K$). \\
(ii) The spinor character of $G$ is $$\chi_G = \prod_{F | W_p = -F} \mathrm{spin}_p,$$ i.e. $\mathrm{spin}_p$ occurs in $\chi_G$ if and only if $F$ has eigenvalue $-1$ under the Atkin--Lehner involution $W_p$. \\
(iii) $F$ belongs to the Sugano Maass space if and only if $G$ does \emph{not} belong to the kernel of the theta map (cf. Section 2.4).
\end{conj}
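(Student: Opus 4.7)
The plan is to recast the conjecture as a Jacquet--Langlands-type transfer between the two inner forms of a common reductive group over $\Q$. The starting point is the exceptional isomorphism $\SU(4)\cong \Spin(6)$ (realized by the half-spin representation of the Clifford algebra), which exhibits $\SU(2,2)$ and the compact $\SU(4)$ as inner forms of the same quasi-split $\mathbb{Q}$-group once the Hermitian/quadratic data of $K$ are fixed consistently. The conjecture should then be read as the statement that a cuspidal automorphic representation $\pi$ of $\SU(2,2)$ whose archimedean component is the holomorphic discrete series of weight $k$, and whose level structure matches $\Gamma_K$, transfers to an automorphic representation $\pi'$ of $\Spin(\cL_K)$ whose archimedean component is the irreducible finite-dimensional representation of highest weight $(\nu,0,0)$ with $\nu=k-4$, and whose level is the adelic $\U(L)$ cut out by the appropriate spinor character.

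First, I would translate both sides into the adelic language: realize $F\in S_k(\Gamma_K)$ as a vector in a cuspidal representation of $\GU(2,2)$ and $G\in \fM_\nu(\Spin(\cL_K))$ as a vector in an automorphic representation of the definite inner form, using the isomorphism $\fM_\rho(\cL)\cong \oplus_i W^{\Gamma_i}$ from Section~2. Matching the archimedean components is done by the unitarian trick: the representation on harmonic polynomials of degree $\nu$ is the compact-form analogue of the holomorphic discrete series of weight $k=\nu+4$. At finite places away from $\Delta_K$, property (i) reduces to showing that the Satake parameters attached to the Hecke eigenvalues computed by Hina--Sugano/Gritsenko/Sugano on the Hermitian side coincide with those computed by Murphy on the orthogonal side; the Euler factors displayed in Sections 1.2 and 2.3 already look structurally identical after the substitution $\nu=k-4$, which is a strong sanity check.

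Property (ii), matching Atkin--Lehner eigenvalues with spinor characters, should follow from a careful analysis of the transfer at primes $p \mid \Delta_K$. Each $W_p$ acts on the local representation of $\SU(2,2)$ at $p$ with an eigenvalue $\pm 1$ determined by the local Langlands parameter, while $\spin_p$ encodes the spinor norm of the corresponding local automorphism group on the orthogonal side; compatibility is essentially a statement about local transfer factors. The Miyawaki and Yoshida lifts are the expected obstructions: Miyawaki lifts are CAP or endoscopic lifts coming from $\SU(2,2)\times \SU(1,1)$ (they originate via Ikeda's construction on a rank-six unitary group), while Yoshida lifts are endoscopic lifts associated to $\mathrm{SO}(2)\times \mathrm{SO}(4)$-type parameters involving a real-quadratic Asai $L$-function and a CM form $h$. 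The content of the conjecture is that these lifts live on one side of the transfer but not the other, and the remaining cuspidal eigenforms correspond bijectively. For property (iii), one would identify the theta map with the global theta correspondence between $\widetilde{\SL}_2$ and $\mathrm{O}(L)$, so that its kernel picks out precisely the non-CAP cuspidal spectrum, while the Sugano Maass space picks out the CAP representations of Gritsenko--Maass type on the Hermitian side.

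The main obstacle is the absence of the required functoriality. The analogous correspondence for $\Sp(2)$ versus its compact twist, conjectured by Ibukiyama and proved by van~Hoften and R\"osner--Weissauer, required the full endoscopic classification for $\GSp_4$; the $\SU(4)/\Spin(6)$ case needs the parallel classification for rank-four unitary groups together with a precise matching of level structures between the full Hermitian modular group $\Gamma_K$ and the genus $\cL_K$. Matching at ramified primes, and verifying that every non-Yoshida eigenform on $\SO(6)$ is actually in the image of the transfer, are the two most delicate points; the best one can hope for at present is the experimental evidence assembled in Sections 4--5 and Appendix B, together with the structural prediction above as a guide for future work.
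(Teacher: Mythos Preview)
The statement you are addressing is labeled \textbf{Conjecture} in the paper, not Theorem; the paper does not contain a proof of it. What the paper offers instead is evidence: the mass computation in Section~4 showing that the main terms of the dimension formulas agree, the exact dimension comparisons for $\Delta_K\in\{-3,-4,-7,-8,-11\}$ in Section~5, and numerical checks of Euler factors in the examples of Section~3 and Appendix~C. No attempt is made to establish the bijection or any of properties (i)--(iii) in general.

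Your proposal is therefore not really a proof either, and to your credit you say so in the final paragraph: you correctly identify that the natural strategy is a Jacquet--Langlands-type transfer between inner forms via the exceptional isogeny $\SU(4)\cong\Spin(6)$, and that the obstruction is the missing endoscopic classification and level-matching at ramified primes. This is a reasonable heuristic framing and is consistent with the paper's own remarks in the Introduction (comparing with the Ibukiyama conjecture for $\Sp(2)$ proved by van~Hoften and R\"osner--Weissauer). But as a proof it has a genuine gap at exactly the place you name: none of the three properties is actually derived, the matching of Satake parameters is asserted rather than checked, and the identification of Miyawaki and Yoshida lifts with specific Arthur packets is only gestured at. In short, your write-up is an outline of what a future proof might look like, which is the appropriate response to an open conjecture, but it should not be presented as a proof.
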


Since the theta map is Hecke-equivariant, part (iii) of Conjecture \ref{conj:main} would follow from the Eichler basis problem for modular forms for the Weil representations of the genera $\mathcal{L}_K$. More precisely, the Hermitian modular form $F$ should simply be the Maass lift of the vector-valued modular form $\theta(G)$. The basis problem was recently shown by M\"uller \cite{Mueller} to have a positive solution for lattices of rank at least seven, and similar methods might show that it holds for lattices of rank six.

\section{The mass formula}

In the rest of the paper we will give some evidence that makes Conjecture \ref{conj:main} plausible.
The main observation is that the asymptotic growth of the dimensions of spaces of Hermitian and algebraic modular forms are equal.

For Hermitian modular forms, the asymptotic dimension formula is easily given in terms of a Bernoulli number \eqref{eq:asymptotic}. To compare this with $\SO(6)$ we have to calculate the mass of the genus $\mathcal{L}_K$.

Note however that we consider algebraic modular forms
with respect to $\SO(6)$, but the notion of genus is usually taken with respect to $\O(6)$-equivalence classes;  we will briefly discuss the difference.
Let $V$ be a finite-dimensional vector space over $\Q$, equipped with a positive definite quadratic form $Q$; let $\O(V)$ be the orthogonal group of $Q$ and $\SO(V) = \O(V) \cap \SL(V)$. The adelizations of these groups are denoted by $\O(V)_A$ and $\SO(V)_A$, and their localizations at a place $v$ by $\O(V)_v$ and $\SO(V)_v$.
For any lattice $L\subset V$ and a finite place $v$ 
of $\Q$, put $L_v=L\otimes_{\Z}\Z_v$, where $\Z_v$ is 
the ring of $v$-adic integers. 
For a fixed lattice $L$ of $V$ and an element $g=(g_v)\in 
\O(V)_A$, we define a lattice $Lg$ of $V$ by 
\[
Lg=\cap_{v<\infty}(L_vg_v\cap V).
\]
Then ${\mathcal L}=\{Lg;g\in \O(V)_A\}$
is the genus containing $L$. We have
$$\mathcal{L} = \{Lg;g\in \SO(V)_A\}$$ because every lattice over $\Z_v$ has an automorphism of determinant $-1$. This is clear if $v\neq 2$ since $L$ is diagonalisable. If $v=2$, then the lattice can be decomposed 
into a direct sum of rank-one and rank-two lattices, where the Gram matrix of the binary lattice is either $2^n\begin{pmatrix} 0 & 1 \\ 1 & 0 \end{pmatrix}$ or 
$2^n \begin{pmatrix} 2 & 1 \\ 1 & 2 \end{pmatrix}$, and in both cases the matrix $\begin{pmatrix} 0 & 1 \\ 1 & 0 \end{pmatrix}$
is such an automorphism. 
Hence if $M=Lg$ for $g\in \O(V)_A$, 
then we also have $M=Lg_0$ for some $g_0\in \SO(V)_A$.

The set $\mathcal{L}$ decomposes into $O(V)$-equivalence classes, and the $\O(V)$-equivalence classes might decompose into finer $\SO(V)$-equivalence classes. In both cases, there are only finitely many classes and their number is called the class number in the wide sense and in the narrow sense, respectively.
Denote by $M_1, \ldots, M_m$ the representatives of $\O(V)$ classes in $\cL$ and by $L_1$, \ldots, $L_h$ 
the representatives of $\SO(V)$ classes in $\cL$.
Clearly we have $m\leq h$. 
For any lattice $M$, we put 
\[
\O(M)=\{g\in \O(V);Mg=M\},
\quad 
\SO(M)=\{g\in \SO(V);Mg=M\}.
\]
The mass of the genus is defined by
\[
M(\cL,\O(V))=\sum_{i=1}^{m}\frac{1}{\#\O(M_i)},
\qquad 
M(\cL,\SO(V))=\sum_{i=1}^{h}\frac{1}{\#\SO(L_i)}.
\]

\begin{lem}
We have 
\[
M(\mathcal{L},\SO(V))=2 \cdot M(\mathcal{L},\O(V)).
\]
\end{lem}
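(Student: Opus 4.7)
The plan is to compare the two mass sums term by term, grouping the $\SO(V)$-classes according to which $\O(V)$-class they lie in. Since $[\O(V):\SO(V)] = 2$, each $\O(V)$-class in $\mathcal{L}$ either coincides with a single $\SO(V)$-class or splits into exactly two $\SO(V)$-classes, and the precise criterion distinguishing these cases involves the existence of automorphisms of determinant $-1$.

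First I would verify the splitting dichotomy: two lattices $M, M' = Mg$ with $g \in \O(V)$ lie in the same $\SO(V)$-class iff there exists $h \in \O(M)$ with $\det(gh)=1$, i.e.\ iff $\O(M) \ne \SO(M)$. Thus, if $\O(M_i) \supsetneq \SO(M_i)$, then the $\O(V)$-class of $M_i$ contributes a single $\SO(V)$-class and $\#\O(M_i) = 2\#\SO(M_i)$; if $\O(M_i) = \SO(M_i)$, then the $\O(V)$-class contributes two $\SO(V)$-classes, represented by $M_i$ and some $M_i' = M_i g_0$ with $\det g_0 = -1$, and conjugation by $g_0$ identifies $\SO(M_i')$ with $\SO(M_i)$, so the two $\SO(V)$-stabilizers have the same order.

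Next I would compute the contribution of each $\O(V)$-class $[M_i]$ to $M(\mathcal{L},\SO(V))$. In the first case above, that contribution is $\tfrac{1}{\#\SO(M_i)} = \tfrac{2}{\#\O(M_i)}$. In the second case, the contribution is $\tfrac{1}{\#\SO(M_i)} + \tfrac{1}{\#\SO(M_i')} = \tfrac{2}{\#\SO(M_i)} = \tfrac{2}{\#\O(M_i)}$ since $\O(M_i) = \SO(M_i)$. In both cases the per-class contribution equals $\tfrac{2}{\#\O(M_i)}$, so summing over $i$ gives $M(\mathcal{L},\SO(V)) = 2\,M(\mathcal{L},\O(V))$.

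There is no real obstacle — the argument is entirely formal once the case split is made. The only point requiring a moment of care is the second case, where one must check that the two $\SO(V)$-classes produced from a single $\O(V)$-class really do have isomorphic (indeed conjugate) $\SO$-stabilizers, which follows immediately from the observation that conjugation by any fixed $g_0 \in \O(V)$ with $\det g_0 = -1$ sends $\SO(M_i)$ onto $\SO(M_i g_0)$.
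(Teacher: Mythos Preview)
Your proof is correct and follows essentially the same approach as the paper's: both split into the two cases $\O(M_i)\supsetneq\SO(M_i)$ and $\O(M_i)=\SO(M_i)$, and in each case verify that the $\SO(V)$-classes lying over a given $\O(V)$-class together contribute $2/\#\O(M_i)$ to the $\SO$-mass. Your write-up is slightly more explicit than the paper's in spelling out the conjugation argument that identifies $\SO(M_i')$ with $\SO(M_i)$ in the second case.
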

\begin{proof}
If $\O(L_i)$ contains an element of determinant 
$-1$, then no $L_j$, $j\neq i$ can be $\O(V)$-equivalent to $L_i$: if it were, then $L_j$ would also be 
$\SO(V)$-equivalent to $L_i$, a contradiction.
In this case, we have $[\O(L_i):\SO(L_i)]=2$.
So 
\[
\frac{2}{\#\O(L_i)}=\frac{1}{\#\SO(L_i)}.
\]
If $\O(L_i)$ does not have an element with determinant $-1$, then the lattice $L_i g$ for any element 
$g\in \O(V)$ with $\det(g)=-1$ is never $\SO(V)$-equivalent to $L_i$ itself, so there is $j\neq i$ such that $L_j$ and $L_i$ are $\O(V)$-equivalent.
In this case, we have $\#\O(L_i)=\#\O(L_j)=\#\SO(L_i)=
\#\SO(L_j)$, so we have proved
\[
\frac{2}{\#\O(L_i)}=\frac{1}{\#\SO(L_i)}+\frac{1}{\#\SO(L_j)}. \qedhere
\]
\end{proof}

Write $K=\Q(\sqrt{-D})$ where $-D=\Delta_K$ is the discriminant of the imaginary quadratic field $K$. 
So $D=m$ with $m\equiv 3 \bmod 4$, or $D=4m$ with 
$2||m$ or $m\equiv 1 \bmod 4$. 
For any prime $p$, we define a lattice $M_p$ 
over $\Z_p$ by $M_p=H\oplus H\oplus O_K(-1)$.
Then we have $\det(M_p)=D$. 

One can show that the product of the Hasse invariants is $\prod_p inv_v(M_p)=1$, so by Proposition 2.1 of 
\cite{ibusaito} there exists a global lattice
$L\subset V$ such that $L_p=M_p$ for every prime.
We consider the genus $\cL_K$ of (positive definite) lattices of 
rank $6$ and determinant $D$  including the lattice $L$. 
Now we calculate the mass 
$M(\cL_K,O(V))$ of $\cL_K$ using the Minkowski--Siegel formula.
We quote the result from Kitaoka, \cite[Theorem 6.8.1]{kitaoka} Let $N$ be a lattice of 
rank $m\geq 2$ and $\cN$ the genus 
containing $N$. Let $dN$ be the discriminant of 
$N$.

\begin{thrm}[Minkowski--Siegel]
\[
M(\cN,O(V))=2\frac{dN^{(m+1)/2}}{\pi^{m(m+1)/4}}
\prod_{i=1}^{m}\Gamma\left(\frac{i}{2}\right)
\prod_{p}\alpha_p(N_p,N_p)^{-1}.
\]
\end{thrm}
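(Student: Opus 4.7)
The plan is to prove the Siegel mass formula via the Tamagawa number of $\O(V)$. Since $m \geq 3$ and $Q$ is positive definite, it is a theorem of Weil (following Siegel) that the Tamagawa number $\tau(\O(V)) = 2$; I would take this as the central input and interpret the mass formula as an explicit computation of an adelic volume.

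First I would fix a gauge form $\omega$ on $\O(V)$ defined over $\Q$, i.e.\ a non-zero top-degree $\Q$-rational invariant differential form. At each place $v$ of $\Q$, this induces a Haar measure $|\omega|_v$ on $\O(V)_v$, and with suitable local convergence factors one assembles the Tamagawa measure $\omega_A$ on $\O(V)_A$ satisfying
\[
\mathrm{vol}(\O(V)_A / \O(V)) = \tau(\O(V)) = 2.
\]
Next, using the double-coset decomposition
\[
\O(V)_A = \coprod_{i=1}^{m} U(N) g_i \O(V),
\]
and the fact that the stabilizer of $g_i$ in $\O(V)$ is $\O(M_i)$ (a finite group), I would express the adelic volume as
\[
2 = \mathrm{vol}(\O(V)_A / \O(V)) = \mathrm{vol}(U(N)) \cdot \sum_{i=1}^m \frac{1}{\#\O(M_i)} = \mathrm{vol}(U(N)) \cdot M(\cN, \O(V)).
\]
This reduces the problem to computing $\mathrm{vol}(U(N))$ with respect to $\omega_A$.

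The volume of $U(N)$ factors as a product over places. At the real place we use the compact quotient $\O(V)_\infty = \O(m)$, whose volume with respect to the invariant form coming from the quadratic form $Q$ is the classical quantity
\[
\mathrm{vol}(\O(m)) = \frac{\pi^{m(m+1)/4}}{\prod_{i=1}^m \Gamma(i/2)},
\]
computed for instance by an inductive fibration $\O(m) \to S^{m-1}$ with fibre $\O(m-1)$. At each finite place, the local volume $\mathrm{vol}(U(N_p))$ differs from the local density $\alpha_p(N_p, N_p)^{-1}$ by an explicit power of $p$ determined by $dN$, since by definition $\alpha_p$ counts self-representations modulo $p^k$ for $k$ large and can be re-expressed as the measure of the stabilizer of $N_p$ inside $\O(V)_p$ relative to the natural $p$-adic measure on the space of quadratic forms. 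Collecting all of these local factors and applying the discriminant formula
\[
|\omega|_\infty / \prod_p |\omega|_p \sim dN^{(m+1)/2}
\]
(arising from the product formula and the Jacobian comparing $\omega$ to the natural form on the space of symmetric matrices) gives exactly the claimed shape.

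The main obstacle is, as usual, the bookkeeping: tracking powers of $p$ between the local measures, the normalization of $\alpha_p$, and the convergence factors needed to define $\omega_A$, and verifying that these cumulatively produce the factor $dN^{(m+1)/2}/\pi^{m(m+1)/4}$ rather than some other multiple. Once these normalizations are pinned down, the factor $2$ in front is precisely $\tau(\O(V))$, and the rest is a direct rearrangement. For our application to $\cL_K$ in rank six, one does not actually need to reprove this formula — it suffices to input the local data $L_p = H \oplus H \oplus \cO_K(-1)$, compute $\alpha_p(L_p, L_p)$ locally (the only nontrivial primes being those dividing $\Delta_K$), and apply the stated theorem directly.
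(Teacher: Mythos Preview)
The paper does not prove this theorem: it is quoted verbatim from Kitaoka \cite[Theorem 6.8.1]{kitaoka} and used as a black box, with the subsequent effort going entirely into evaluating the local densities $\alpha_p(L_p,L_p)$ for the particular genus $\cL_K$. Your final paragraph already anticipates this, and that is exactly what the paper does.

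Your sketch via the Tamagawa number $\tau(\O(V))=2$ is the standard modern route to the Siegel--Weil mass formula and is correct in outline. The genuine content, as you say, is the bookkeeping: matching the normalization of $\alpha_p$ in Kitaoka (note the factor $2^{-1}$ in the definition of $\alpha_p$ there), the choice of gauge form $\omega$, and the archimedean volume, so that the constant comes out as $2\,dN^{(m+1)/2}/\pi^{m(m+1)/4}$ on the nose. You have not carried this out, only asserted that it works; a complete proof would require either doing this explicitly or citing a source (e.g.\ Kneser, Voskresenski\u{\i}, or the discussion in Kitaoka itself) where the Tamagawa-number computation is reconciled with Siegel's classical normalization. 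One specific point to watch: your claimed formula for $\mathrm{vol}(\O(m))$ omits a power of $2$ present in most normalizations, and whether this is absorbed elsewhere depends precisely on the conventions you have not pinned down.
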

$\alpha_p(N_p,N_p)$ is the local density, 
defined by 
\[
\alpha_p(N_p,N_p)=2^{-1}\lim_{r\rightarrow \infty}
p^{m(m+1)/2-m^2}\#A'_{p^r}(N_p,N_p).
\]
Here we denote by $S$ the gram matrix of $N_p$ and we put
\[
A'_{p^r}(N_p,N_p)=\{X\in M_m(\Z_p)/p^rM_m(\Z_p);
^{t}XSX\equiv S \bmod p^r\}.
\]

The local densities $\alpha_p(M_p,M_p)$ are completely 
described by Theorem 5.6.3 in \cite{kitaoka}.
However, the formula is very complicated and it involves a lot of notation, which it seems better not to review here. 
We will freely use the notation of
Theorem 5.6.3 of \cite{kitaoka} below and only indicate what the values of the symbols are in our cases.

First note that $m=6$, $dM=D$, 
$\pi^{-6(6+1)/2}\prod_{i=1}^{6}\Gamma(i/2)=\frac{3\pi^9}{4}$.
So we have 
\[
M(\cL_K,O(V))=\frac{3D^{7/2}}{2\pi^9}
\prod_{p}\alpha_p(M_p,M_p)^{-1}.
\]
\begin{prop}\label{mass}
Denote by $t$ the number of prime divisors of 
$\Delta_K$. Then 
\[
M(\cL_K,O(V))=\frac{B_{3,\chi_K}}{2^{t-1}\cdot 2^9 \cdot 3^2\cdot 5}
\]
\end{prop}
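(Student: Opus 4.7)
The plan is to compute the product of local densities $\prod_p \alpha_p(M_p, M_p)$ explicitly and substitute into the formula
$$M(\cL_K, O(V)) = \frac{3 D^{7/2}}{2 \pi^9} \prod_p \alpha_p(M_p, M_p)^{-1}$$
already derived from Minkowski--Siegel with $m = 6$. The Euler product splits naturally into good primes (those not dividing $\Delta_K$) and ramified primes, and the two pieces behave quite differently.

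At a good prime $p \nmid \Delta_K$, the localization $M_p = H \oplus H \oplus \mathcal{O}_K(-1)_p$ is unimodular of rank six, and its discriminant class is exactly $\chi_K(p)$: the factor $\mathcal{O}_K(-1)_p$ is a hyperbolic plane when $p$ splits in $K$ and the unramified anisotropic plane when $p$ is inert. Specializing Theorem 5.6.3 of Kitaoka gives
$$\alpha_p(M_p, M_p) = (1 - p^{-2})(1 - p^{-4})(1 - \chi_K(p) p^{-3}),$$
so the Euler product over good primes telescopes into
$$\prod_{p \nmid \Delta_K} \alpha_p(M_p, M_p)^{-1} = \zeta(2)\,\zeta(4)\, L(3,\chi_K) \cdot \prod_{p \mid \Delta_K}(1 - p^{-2})(1 - p^{-4}).$$
For the ramified primes one applies Kitaoka's Theorem 5.6.3 directly to the Jordan decomposition $M_p = H \oplus H \oplus \mathcal{O}_K(-1)_p$, in which the third summand is a non-unimodular $p$-modular block of rank two. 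The resulting factor, combined with the leftover $\prod_{p \mid \Delta_K}(1 - p^{-2})(1 - p^{-4})$ from the good-prime contribution, should simplify to the clean expression $3/(D \cdot 2^{t+6})$. Substituting the formula
$$L(3, \chi_K) = \frac{2 \pi^3 B_{3, \chi_K}}{3\, D^{5/2}}$$
(which follows from the functional equation of $L(s,\chi_K)$ together with $L(-2,\chi_K) = -B_{3,\chi_K}/3$, and is consistent with equation~(4.1) above), together with $\zeta(2) = \pi^2/6$ and $\zeta(4) = \pi^4/90$, makes all powers of $\pi$ and of $D$ cancel, and the remaining numerical constants assemble to exactly $B_{3, \chi_K}/(2^{t-1} \cdot 2^9 \cdot 3^2 \cdot 5)$.

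The main obstacle is the calculation at the ramified primes. Theorem 5.6.3 of Kitaoka is phrased in cumbersome notation and requires careful tracking of Hasse invariants and unit-square classes for the Jordan structure of $M_p$. The dyadic case $p = 2$ is particularly delicate, splitting into subcases according to the $2$-adic valuation of $\Delta_K$ (i.e.\ whether $\Delta_K$ is odd, $\Delta_K \equiv 4 \pmod 8$, or $\Delta_K \equiv 0 \pmod 8$). The consistency check is that the local factor at each ramified prime combines with the $p$-adic piece absorbed from the good-prime Euler product to contribute $p^{-v_p(D)} \cdot 2^{-1}$, up to a global factor of $3/2^{6}$, so that the overall product is exactly $3/(D \cdot 2^{t+6})$; any miscount of signs or normalizations at $p = 2$ will spoil this identity.
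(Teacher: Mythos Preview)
Your outline follows exactly the paper's strategy: evaluate the Minkowski--Siegel mass by computing each local density $\alpha_p$ via Kitaoka's Theorem~5.6.3, separate good from ramified primes, and recognize the good-prime Euler product as $\zeta(2)\zeta(4)L(3,\chi_K)$ up to finitely many factors.

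However, what you have written is a plan, not a proof. The substantive content --- evaluating $\alpha_p$ at the ramified primes, above all at $p=2$ --- is precisely what you flag as ``the main obstacle'' and then leave undone. The paper actually carries this out in three cases ($D\equiv 3\bmod 4$; $D=4m$ with $m\equiv 1\bmod 4$; $D=4m$ with $2\,\|\,m$), in each case writing down the Jordan decomposition of $M_p$, reading off the invariants $s,w,n_i,q_j$ and the factors $P,E$ from Kitaoka's theorem, and assembling them into an explicit $\alpha_p$. Without those computations there is nothing to check.

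There is also a genuine error in your good-prime step. The formula $\alpha_p=(1-p^{-2})(1-p^{-4})(1-\chi_K(p)p^{-3})$ is correct for \emph{odd} good $p$, but in Kitaoka's normalization $\alpha_2$ carries an additional power of~$2$ even when $2\nmid\Delta_K$; this is why the paper treats $p=2$ separately in every subcase rather than absorbing it into the generic Euler product. Once this is accounted for, the combined ramified-plus-leftover factor is $1/(D\cdot 2^{t+6})$, not the $3/(D\cdot 2^{t+6})$ you predict. (Your extra $3$ comes from aiming at the $3^2$ printed in the statement; the paper's own proof derives $3^3$, and this is confirmed by the case $\Delta_K=-3$, where the genus is $\{E_6\}$ with $|O(E_6)|=103680=2^8\cdot 3^4\cdot 5$, and by the subsequent asymptotic dimension formula that has $3^4$ in the denominator.)
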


\begin{proof}
For the sake of simplicity, we write 
$\alpha_p(M_p,M_p)=\alpha_p(M_p)$.

First assume that $D\equiv 3 \bmod 4$. If $p\neq 2$ and $p\nmid D$, then $$n_i=0, 
, (i\neq 0), 
\quad s=1, \quad w=0, \quad N_0=5(1)\oplus (D),$$ and $N_0$ 
is hyperbolic (i.e. isomorphic to $H\oplus H \oplus H$) if and only if $(-1/p)=(D/p)$,
where $(*/*)$ is the quadratic residue symbol. 
In other words, $\chi(N_0)=(-D/p)$. 
We have $$E=(1+(-D/p)p^{-3})^{-1}, \quad 
P=P(3)=(1-p^{-2})(1-p^{-4})(1-p^{-6}).$$
Considering the difference between $\beta_p$ and $\alpha_p$ in page 
98 of \cite{kitaoka}, we have 
\[
\alpha_p(M_p)=(1-p^{-2})(1-p^{-4})(1-(-D/p)p^{-3}).
\]
When $p\neq 2$ and $p|d$, then 
$$N_0=4(1)\oplus (-2), \quad N_1=(-D/2), \quad s=2, \quad w=1,$$  and $$E=1, \quad P=P(2)=(1-p^{-2})(1-p^{-4}),$$
and therefore
\[
\alpha_p(M_p)=2p(1-p^{-2})(1-p^{-4}).
\]
When $p=2$, then $$n_0=6, \quad q_j=0, \quad w=0,$$ and  
$$P=P(3)=(1-2^{-2})(1-2^{-4})(1-2^{-6}), \quad E_0=(1+(-D/2)2^{-3})/2,$$ where $(-D/2)$ is $+1$ if $-D \equiv 1 \bmod 8$ and $-1$ if 
$-D\equiv 5 \bmod 8$, and $E_j=1$ if $j\neq 0$, and therefore  
$$E=E_0^{-1}=2/(1+(-D/2)2^{-3}).$$ So we have 
\[
\alpha_p(M_p)=2^6(1-2^{-2})(1-2^{-4})(1-(-D/2))^{-3}.
\]
Altogether, denoting by $t$ the number of prime divisors of $\Delta_K$ and $\chi_{K}(n)=\left(\frac{\Delta_K}{n}\right)$, 
the mass of the genus is given by 
\[
M(\cL_K,O(V))=\frac{3|\Delta_K|^{5/2}}{2^{7+t}}
\zeta(2)\zeta(4)L(3,\chi_K).
\]
Since we have $\zeta(2)=\pi^2/6$, $\zeta(4)=\pi^4/90$ and 
\[
\frac{|\Delta_K|^{5/2}L(3,\chi_K)}{\pi^3}=\frac{2}{3}B_{3,\chi_K},
\]
this simplifies to
\[
M(\cL_K,O(V))=\frac{B_{3,\chi_K}}{2^{t-1}\cdot 2^9\cdot 3^3\cdot 5}.
\]
Next we consider the case $D=4m$, 
$m\equiv 1 \bmod 4$.
At $p\neq 2$, the local isometry class is
\[
M_p=H\oplus H \oplus 
\begin{pmatrix}-2 & 0 \\ 0 & -2m \end{pmatrix}.
\]
If in addition $p\nmid m$, then $M_p$ is 
unimodular and equivalent to 
$5(1)\oplus (4D)$. 
We have $$w=0, \quad s = 1,$$ $$P=(1-p^{-2})(1-p^{-4})(1-p^{-6}), \quad E=(1+\chi_K(p)p^{-3})^{-1}.$$ Hence
\[
\alpha_p(M_p)=(1-p^{-2})(1-p^{-4})(1-\chi_K(p)p^{-3}).
\]
For $p|m$ and $p\neq 2$, the Jordan decomposition is $M_p=N_0\oplus p \cdot N_1$ where 
by $$N_0=(1)\oplus (1) \oplus (1) \oplus (1) 
\oplus (-2)$$ and $N_1=(-m)$. 
We have $$s=2, \quad w=1, \quad P=(1-p^{-2})(1-p^{-4}),$$ and
$E=1$ (since $\mathrm{rank}(N_j)$ is odd and 
$\chi(N_j)=1$), and therefore
\[
\alpha_p(M_p)=2p(1-p^{-2})(1-p^{-4}).
\]
If $p=2$, then $M_p=N_0\oplus 2 N_1$ where 
$N_0=H\oplus H$ and 
$N_1=\begin{pmatrix} -1 & 0 \\ 0 & -m \end{pmatrix}$.
We have $$q_1=n_1=2, \quad w=3, \quad P=(1-2^{-2})(1-2^{-4}),$$ $$E_{-2}=1, \quad E_{-1}=1, \quad E_0=1/2,$$
$$E_1=1/2, \quad E_2=1/2, \quad E_3=1$$ and 
$$E=\prod_{j}E_j^{-1}=2^3,$$ and therefore
\[
\alpha_2(M_2)=2^9(1-2^{-2})(1-2^{-4}).
\]
Denoting by $t$ the number of prime divisors of $\Delta_K$, we obtain the mass
\[
M(\cL_K,O(V))=\frac{|\Delta_K|^{5/2}L(3,\chi_K)}
{\pi^3}\times \frac{1}{2^{t-1}\cdot 2^{10}\cdot 3^2\cdot 5}
=
\frac{B_{3,\chi_K}}{2^{t-1}\cdot 2^9\cdot 3^3\cdot 5}.
\]
Finally consider the case $\Delta_K=-4m$ where $2||m$. Write $m=2 m_0$.
If $p\neq 2$ and $p\nmid m_0$, then $M_p$ is unimodular and we have 
$$w=0, \quad s=1, \quad P=(1-p^{-2})(1-p^{-4})(1-p^{-6}),$$ and $M_p=N_0$ is hyperbolic if and only if the determinant is $-1 \bmod 8$, so 
$$\chi(N_0)=\chi_K(p), \quad E=(1+\chi_K(p)p^{-3})^{-1}.$$
So we have 
\[
\alpha_p(M_p)=(1-p^{-2})(1-p^{-4})(1-\chi_K(p)p^{-3}).
\]
If $p\neq 2$ and $p|m_0$, then we have $M_p=N_0\oplus p N_1$,
where $N_0=H\oplus H \oplus (-2)$ and
$N_1=(-4m_0/p)$. We have
$$n_1=5, \quad n_2=1, \quad s=2, \quad w=1,$$ 
$$P=(1-p^{-2})(1-p^{-4}), \quad \chi(N_0)=\chi(N_1)=0, \quad E=1.$$ So we have 
\[
\alpha_p(M_p)=2p(1-p^{-2})(1-p^{-4}).
\]
At $p=2$, we have $M_p=N_0\oplus 2N_1\oplus 4N_2$,
where $$N_0=H\oplus H, \quad N_1=(-1), \quad N_2=(-m_0).$$ Hence
$$s=3, \quad n_0=4, \quad n_1=n_2=1,$$ $$w=4, \quad q_0=1, \quad q_1=1, \quad q_2=1, \quad q=3,$$ 
$$P=(1-2^{-2})(1-2^{-4}),$$ $$E_{-2}=E_{-1}=1, \quad E_0=E_1=E_2=E_3=1/2,$$ $$E_4=1, \quad E=2^4.$$ Hence we have
\[
\alpha_2(M_2)=2^{10}(1-2^{-2}(1-2^{-4}).
\]
Altogether,
\[
\prod_{p}\alpha_p(M_p)^{-1}=\frac{\zeta(2)\zeta(4)L(3,\chi_K)}{2^{t-1}\cdot 2^{10}\cdot m_0}
\]
where $t$ is as before.
The mass of the genus is therefore \[
M(\cL_K,O(V))=
\frac{(8m_0)^{5/2}L(3,\chi_K)}{\pi^3}\times
\frac{1}{2^{t-1}\cdot 2^{10}\cdot 3^2\cdot 5}
=
\frac{B_{3,\chi_K}}
{2^{t-1}\cdot 2^9\cdot 3^3 \cdot 5}. \qedhere
\]
\end{proof}

For the spherical representation $(\nu,0,0)$ of $\SO(6)$ with 
even $\nu$, the character of $\pm 1_6$ is 
given by 
\[
\frac{(\nu+1)(\nu+2)^2(\nu+3)}{12},
\]
so multiplying double this to the mass of $\mathcal{L}_K$ with respect to $\SO(V)$, which is twice the mass 
obtained in Proposition \ref{mass}, 
we obtain the asymptotic dimension 
\[
\dim \fM_{\nu}(\cL_K)\sim \nu^4\cdot 
\frac{B_{3,\chi_K}}{2^{t-1}2^{9}\cdot 3^4\cdot 5}=
\nu^4\cdot \frac{B_{3,\chi_K}}{2^{t-1} \cdot 207360}, \quad \nu \rightarrow \infty, \; \nu \; \text{even}.
\]
Now we consider spinor characters.
Since the spinor norm of $-1_6$ is the same as 
$D=|\Delta_K|$ (by \cite{kitaoka} p.30, Exercise 1), 
we have $\spin_{d_0}(-1_6)=-1$ if and only if 
$d_0$ is divisible by an odd number of prime factors. 
So for example, for an odd prime $p|D$, 
the main term of 
$\fM_{\nu}(\cL_K)+\fM_{\nu}(\cL_K,\spin_p)$ 
is the same as that of $\fM_{\nu}(\cL_K)$:
\[
\dim \fM_{\nu}(\cL_K)+\dim \fM_{\nu}(\cL_K,\spin_p)\sim \nu^4\cdot 
\frac{B_{3,\chi_K}}{2^{t-1}\cdot 2^9\cdot 3^4\cdot 5}.
\]
Here the mass is doubled, but the kernel of $\spin_p$ does not contain $-1_6$ and so the main term only involves the contribution of $1_6$ and not $\pm 1_6$.
When $\Delta_K=-4$, then $\spin_4(-1_6)=1$ and we have 
\[
\dim \fM_{\nu}(\cL_K)+\dim \fM_{\nu}(\cL_K, \spin_p)\sim \nu^4\cdot 
\frac{B_{3,\chi_K}}{\cdot 2^8\cdot 3^4\cdot 5}.
\]

In general, when $\Delta_K\neq -4$, we have 
\[
\dim \fM_{\nu}(\Spin\, \cL_K) = \sum_{d_0|\Delta_D}\dim \fM_{\nu}(\cL_K,\spin_{d_0})
\sim \nu^4\cdot \frac{B_{3,\chi_K}}{2^9\cdot 3^4\cdot 5},
\]
where $d_0$ runs over fundamental discriminants 
or 1.

\section{Comparison with dimensions of Hermitian modular forms}
\subsection{Comparison of main terms}
In this subsection, we will see that the 
main terms of the trace formula for algebraic modular forms on $SO(6)$ with arbitrary spin characters and for Hermitian modular forms coincide. 
We also compare algebraic modular forms on $SO(6)$ without character 
and Hermitian modular forms for the maximal discrete extension $\Gamma_K^*$ of $\Gamma_K$.
These computations support our conjecture because the
dimensions of the spaces of lifts are of asymptotic order at most $k^3$, while the orders of the main terms are of order $k^4$.
We will also show that the dimensions are exactly equal in all cases where the actual dimensions are known. 

We abbreviate
\[
\fM_{\nu}(\Spin(\cL_K))=
\sum_{d|\Delta_K}\dim \fM_{\nu}(\cL_K, \spin_d),
\]
where $d$ runs over squarefree positive integers for which $d|\Delta_K$.

\begin{prop}\label{prop:dim_Mnu}
The main terms of $\dim \fM_{\nu}(\Spin(\cL_K))$ and 
$\dim M_{\nu+4}(\Gamma_K)$ coincide. They are  
equal to 

\[
\frac{B_{3,\chi_K}}{2^9\cdot 3^4\cdot 5}
(\nu+1)(\nu+2)^2(\nu+3)
\]
if $\Delta_K\neq -4$ and
\[
\frac{(\nu+1)(\nu+2)^2(\nu+3)}{69120}
\]
if $\Delta_K=-4$.

\end{prop}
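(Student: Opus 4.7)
The plan is to compute the main (degree-$4$ in $\nu$) contribution on each side separately and verify they coincide; ``main term'' here refers to the contribution of the central elements of the relevant group to the (Selberg or Molien-type) trace formula for dimension, with all remaining terms polynomial in $\nu$ of degree at most $3$.

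For the algebraic side, I would apply Molien's formula
$$\dim \fM_\nu(\cL_K, \spin_{d_0}) = \sum_{i=1}^h \frac{1}{|\Gamma_i|} \sum_{\gamma \in \Gamma_i} \spin_{d_0}(\gamma)\,\mathrm{tr}(\rho(\gamma)|W_\nu),$$
where $W_\nu$ is the space of degree-$\nu$ harmonic polynomials on $\R^6$. Only $\gamma=\pm 1_6$ produces a degree-$4$ contribution, namely $\dim W_\nu = (\nu+1)(\nu+2)^2(\nu+3)/12$ for $1_6$ and $(-1)^\nu\dim W_\nu$ for $-1_6$, giving
$$\dim \fM_\nu(\cL_K,\spin_{d_0}) = \bigl(1+\spin_{d_0}(-1_6)(-1)^\nu\bigr)\,\dim W_\nu \cdot M(\cL_K,\SO(V)) + O(\nu^3).$$
I would then sum over $d_0 \mid \Delta_K$. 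Since the spinor norm of $-1_6$ equals $|\Delta_K|$ in $\Q^\times/(\Q^\times)^2$, for $\Delta_K \neq -4$ exactly $2^{t-1}$ of the $2^t$ characters $\spin_{d_0}$ satisfy $\spin_{d_0}(-1_6) = (-1)^\nu$ (for either parity of $\nu$), while for $\Delta_K = -4$ all characters satisfy $\spin_{d_0}(-1_6) = +1$ (since $|\Delta_K|=4$ is a square in $\Q^\times$). Plugging in the mass formula from Proposition \ref{mass} together with $M(\cL_K,\SO(V))=2\,M(\cL_K,\O(V))$ then yields the stated polynomial.

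For the Hermitian side, I would invoke Hirzebruch--Mumford proportionality---the same machinery used by \cite{GHS2007} to derive \eqref{eq:volume}---to express the main term of $\dim M_k(\Gamma_K)$ as $\mathrm{vol}(\mathcal{F}) \cdot P(k)$, where $P(k)$ is the Hilbert polynomial on the compact dual $G(2,4) = \SU(4)/S(\U(2)\times \U(2))$ of the line bundle associated to the weight-$k$ automorphy factor $\det(cZ+d)^k$. Identifying this line bundle as $\cO(k-4)$ on $G(2,4)$ (with the shift by $4$ coming from the Fano index of $G(2,4)$, equivalently the half-sum of noncompact positive roots), the Weyl dimension formula for $\mathfrak{sl}_4$ gives $P(k) = \dim V_{(k-4)\omega_2} = (k-3)(k-2)^2(k-1)/12$. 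Combined with $\mathrm{vol}(\mathcal{F}) = \pi^4 B_{3,\chi_K}/3240$ and $\nu = k-4$ this produces the same polynomial as on the algebraic side. The exceptional doubling at $\Delta_K = -4$ matches on both sides: the Hermitian volume is doubled because $[\U(2,2;\cO_K):\Gamma_K] = 2$ (see the discussion following \eqref{eq:volume}), while the algebraic contribution is doubled because all $\spin_{d_0}(-1_6) = +1$ in this case.

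The one substantive technical step is the identification of the weight-$k$ automorphic line bundle on $G(2,4)$ as $\cO(k-4)$---i.e., pinning down the correct shift in the Hilbert polynomial. This follows from a direct character computation of the center of $\SU(4)$ acting on the $K$-type associated to weight-$k$ modular forms, but as a cross-check one can match the resulting leading coefficient $B_{3,\chi_K}/(2^9 \cdot 3^4 \cdot 5)$ against the known Hilbert series of Theorem~3 (the cases $|\Delta_K| \in \{3,4,7,8,11\}$) via the generating-series analysis of the remark following that theorem.
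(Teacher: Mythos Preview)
Your proposal is correct and follows essentially the same approach as the paper. On the algebraic side your Molien-type argument is exactly what the paper does in the paragraphs preceding the proposition (after Proposition~\ref{mass}): identifying the $\pm 1_6$ contributions, computing $\dim W_\nu = (\nu+1)(\nu+2)^2(\nu+3)/12$, and summing over spinor characters using the spinor norm of $-1_6$. On the Hermitian side you are slightly more explicit than the paper: the paper simply invokes the asymptotic \eqref{eq:asymptotic} from the central contribution to the Selberg trace formula, whereas you derive the exact polynomial $(k-3)(k-2)^2(k-1)/12$ via Hirzebruch--Mumford proportionality and the Weyl dimension formula on $G(2,4)$. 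These are the same computation packaged differently, and your version has the advantage of producing the full degree-$4$ polynomial rather than just its leading coefficient.
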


For $\Delta_K = -4$, there are no modular forms of odd weight and the formula for even weight is twice what is predicted by the formula for general $\Delta_K$.

Let $\Gamma^*_K$ be the largest discrete subgroup of $\SU(2,2;\mathbb{C})$ that contains $\Gamma_K$. As described in Section \ref{subs:AL}, we have
\[
\Gamma_K^* = \bigcup_{\substack{d | \Delta_K \\ d \, \text{squarefree}}} \Gamma_K W_d
\]
where $W_d$ are certain Atkin--Lehner involutions, and $[\Gamma_K^{*}:\Gamma_K]=2^{t}$ where $t$ is the number of prime discriminants dividing $\Delta_K$.

\begin{prop}
The main terms of 
$\dim \fM_{\nu}(\cL_K)$ and $\dim M_{\nu+4}(\Gamma_K^*)$ coincide. They are given by 
\[
\frac{B_{3,\chi_K}}{2^{t-1}\cdot 2^9\cdot 3^4\cdot 5}
(\nu+1)(\nu+2)^2(\nu+3),
\]
if $\Delta_K \ne 4$ and \[
\frac{(\nu+1)(\nu+2)^2(\nu+3)}{69120}
\]
if $\Delta_K=-4$.
\end{prop}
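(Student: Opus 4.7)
The plan is to establish two ratio identities,
\[
\dim \fM_\nu(\cL_K) \sim \tfrac{1}{2^{t-1}}\dim \fM_\nu(\Spin(\cL_K)), \qquad \dim M_{\nu+4}(\Gamma_K^*) \sim \tfrac{1}{2^{t-1}}\dim M_{\nu+4}(\Gamma_K),
\]
after which Proposition~\ref{prop:dim_Mnu} supplies the common asymptotic value. Each side involves an index-$2^t$ subgroup of a larger object whose leading order is already known, and the content of the argument in each case is that exactly $2^{t-1}$ of the $2^t$ pieces contribute to leading order.

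The first (algebraic) identity is essentially a character-theoretic calculation. From the spinor-character decomposition $\fM_\nu(\Spin(\cL_K)) = \bigoplus_{d_0 | \Delta_K} \fM_\nu(\cL_K, \spin_{d_0})$ over the fundamental discriminants including $d_0=1$, the Frobenius averaging formula shows that the leading asymptotic of each summand $\fM_\nu(\cL_K,\spin_{d_0})$ is proportional to $\chi_\rho(1_6) + \spin_{d_0}(-1_6)\chi_\rho(-1_6)$ times the mass $M(\cL_K,\SO(V))$. Because the spinor norm of $-1_6$ equals $|\Delta_K|$ modulo squares, exactly $2^{t-1}$ of the $2^t$ characters satisfy $\spin_{d_0}(-1_6)=(-1)^\nu$ and contribute at leading order, with $\spin_1$ among them for even $\nu$; the remaining $2^{t-1}$ summands exhibit the cancellation $\chi_\rho(1_6) + \spin_{d_0}(-1_6)\chi_\rho(-1_6) = 0$. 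Substituting $M(\cL_K,\SO(V)) = 2M(\cL_K,\O(V))$ (Lemma~4.2) and Proposition~\ref{mass} produces the claimed formula for $\dim \fM_\nu(\cL_K)$.

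For the Hermitian identity I would run the analogous argument using the Atkin--Lehner decomposition $M_{\nu+4}(\Gamma_K) = \bigoplus_\epsilon M_{\nu+4}(\Gamma_K, \epsilon)$ over characters $\epsilon$ of $\Gamma_K^*/\Gamma_K \cong (\Z/2)^t$, so that $M_{\nu+4}(\Gamma_K^*)$ is the trivial-$\epsilon$ component. The claim reduces to showing that only $2^{t-1}$ of the $2^t$ sign patterns contribute at leading order and share a common asymptotic, with the trivial pattern always among them. By analogy with the spinor-norm constraint, this should follow from the existence of a distinguished pseudocentral element of $\Gamma_K^*/\Gamma_K$---the natural candidate being $W_{\Delta_K} = \prod_i W_{d_i}$ over the prime-discriminant divisors---whose eigenvalue on scalar weight-$(\nu+4)$ forms is fixed by the parity of $\nu$, reducing the effective quotient acting at leading order from $(\Z/2)^t$ to $(\Z/2)^{t-1}$.

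The main obstacle is this Hermitian half of the proof. A naive volume count gives only a $2^{-t}$ reduction in the Selberg main term from $\Gamma_K$ to $\Gamma_K^*$, so the $2^{-(t-1)}$ ratio demands a genuine trace-formula input that identifies the pseudocentral Atkin--Lehner element and its contribution, rather than merely counting fundamental-domain volumes. In the cases $t=1$ covered by Theorem~1.7 (namely $\Delta_K \in \{-3,-4,-7,-8,-11\}$), where $2^{t-1}=1$, the claim reduces to $\dim M_{\nu+4}(\Gamma_K^*) \sim \dim M_{\nu+4}(\Gamma_K)$ and can be verified directly from the Hilbert series together with the known $W_{\Delta_K}$-decomposition; a uniform treatment for general $t$ is the substantive part of the argument.
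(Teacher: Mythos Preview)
Your treatment of the algebraic side is correct and essentially reproduces the discussion preceding the proposition in the paper. The gap is entirely on the Hermitian side, and it is smaller than you think.

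You write that ``a naive volume count gives only a $2^{-t}$ reduction'' and that bridging to $2^{-(t-1)}$ ``demands a genuine trace-formula input''. This is where you go wrong. The pseudocentral element you are looking for is not merely pseudocentral: it is the honest scalar matrix $i\cdot 1_4$, which acts trivially on $\mathbf{H}_2$. The point (due to Krieg--Raum--Wernz) is that $i\cdot 1_4$ lies in the Atkin--Lehner coset $\Gamma_K W_m$, hence in $\Gamma_K^*$; and for $K\neq\Q(i)$ it does \emph{not} lie in $\Gamma_K$. Thus the kernel of the action of $\Gamma_K^*$ on $\mathbf{H}_2$ is strictly larger than that of $\Gamma_K$, and the effective index of $\Gamma_K/\{\pm 1_4\}$ in $\Gamma_K^*/\{\pm 1_4,\pm i\cdot 1_4\}$ is $2^{t-1}$, not $2^t$. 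So the naive volume count already gives
\[
\mathrm{vol}(\mathbf{H}_2/\Gamma_K)=2^{t-1}\cdot\mathrm{vol}(\mathbf{H}_2/\Gamma_K^*),
\]
and the main term follows immediately from Proposition~\ref{prop:dim_Mnu}. For $K=\Q(i)$ one has $i\cdot 1_4\in\Gamma_K$ already, $[\Gamma_K^*:\Gamma_K]=2$, and the same conclusion holds. No further trace-formula analysis is required.

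Your guess that the distinguished element should be $W_{\Delta_K}$ was essentially right---in the coset language, $i\cdot 1_4$ represents exactly the ``full'' Atkin--Lehner coset---but framing it as a constraint on eigenvalues rather than as a central element acting trivially on the domain led you to overestimate the difficulty.
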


\begin{proof}
We have $i 1_4\in \Gamma_K W_m$
by Remark 2 of \cite{kriegraumwernz}. 
If $K\neq \Q(\sqrt{-1})$, then $i 1_4\not\in \Gamma_K$ and we have 
\[
\mathrm{vol}(\mathbf{H}_2/\Gamma_K)=2^{t-1} \cdot \mathrm{vol}(\mathbf{H}_2/\Gamma_K^*).
\]
If $K=\Q(\sqrt{-1})$, then $i 1_4\in \Gamma_K$ and 
$[\Gamma_K^*:\Gamma_K]=2$.
The claim now follows from Proposition \ref{prop:dim_Mnu}.
\end{proof}

When $-\Delta_K$ is an odd prime, then 
$\Gamma_K^*=\Gamma_K \cup (i 1_4)\Gamma_K$, so in this case
\[
M_k(\Gamma_K^*)=\left\{
\begin{array}{cc}
M_k(\Gamma_K)     & \text{if $k$ is even.}  \\
    0 & \text{ if $k$ is odd. } 
\end{array}\right.
\]
Therefore $$\sum_{k=0}^{\infty} \mathrm{dim}\, M_k(\Gamma_K^*) t^k = \sum_{k=0}^{\infty} \mathrm{dim}\, M_{2k}(\Gamma_K) t^{2k}.$$

When $K=\Q(\sqrt{-1})$, then we have 
\[
\Gamma_K^*=\Gamma_K\cup 
\left(\frac{1+i}{2}\right)\begin{pmatrix} U & 0 \\ 0 & U \end{pmatrix}\Gamma_K \qquad \text{for }\quad U=\begin{pmatrix} 0 & 1 \\ 1 & 0 \end{pmatrix},
\]
and this simplifies to
$$\U(2,2;\cO_K):=\U(2,2;\C)\cap M_4(\cO_K).$$ In this case, 
\begin{align*}
\sum_{k=0}^{\infty}\dim M_k(\Gamma_K^*)t^k
& =
\sum_{k=0}^{\infty}\dim M_k(\U(2,2;\cO_k))t^k
\\ & = 
\frac{1+t^{44}}{(1-t^4)(1-t^6)(1-t^8)(1-t^{10})(1-t^{12})}.
\end{align*}
The last equality is due to Aoki \cite{A}.

\subsection{Comparison of dimensions for small discriminants}
Now we compare the dimensions of Hermitian modular forms and algebraic modular forms when the discriminant is $-3$, $-4$, $-7$, $-8$, or $-11$. These are the only cases where the dimensions of Hermitian modular forms are known in all weights.

For each even $k\geq 6$, we have one Siegel Eisenstein series, so the generating series of dimensions of Eisenstein series of weight $k\geq 6$ in $M_k(\Gamma_K)$ is given by 
\[
\Eis=\frac{t^6}{1-t^2}.
\]
We also have a non-cuspidal Hermitian modular form of weight $4$, also an Eisenstein series (although the naive definition of the Siegel Eisenstein series does not converge absolutely), but it corresponds to the constant algebraic modular form of weight zero.
The number of Klingen-type Eisenstein series of weight $k$ is equal to $\dim S_k(\SL_2(\Z))$, so their
generating series is 
\[
\KE=\sum_{k=0}^{\infty} \dim S_k(\SL_2(\Z)) t^k = \frac{t^{12}}{(1-t^4)(1-t^6)}.
\]
The dimensions of the spaces of Miyawaki lifts from $S_{k}(\SL_2(\Z))\times S_{k-2}(\SL_2(\Z))$
is
\[
\mathrm{MW} =\sum_{k=0}^{\infty}
\bigl(\dim S_k(\SL_2(\Z))\times \dim S_{k-2}(\SL_2(\Z))\bigr)t^k=
\frac{t^{18}}{(1-t^2)(1-t^6)(1-t^{12})}
\]
In particular, the contributions $\mathrm{Eis}$, $\mathrm{K}$, $\mathrm{MW}$ are independent of $\Delta_K$.

There is also the conjectural Yoshida-type lift that produces algebraic modular forms. These should never occur for discriminants $-3$ or $-4$. For discriminants $\Delta_K = -7$, $-8$ and $-11$ the only Yoshida-type lifts of weight $\nu$ involve eigenforms for $\mathrm{SL}_2(\mathbb{Z}) \times \mathrm{SL}_2(\mathbb{Z})$ of weight $\nu+3$ and the unique CM eigenform of level $\Gamma_0(|\Delta_K|)$ and weight three with quadratic character. We therefore put
\[
\mathrm{Y}=\sum_{\nu=0}^{\infty}
\bigl(\dim S_{\nu+3}(\SL_2(\Z))\times \dim S_{\nu+3}(\SL_2(\Z))\bigr)t^{\nu},
\]
which equals
\[
\mathrm{Y}=\frac{t^9(1+t^{12})}{(1-t^4)(1-t^6)(1-t^{12})},
\]
and should be the generating series of the numbers of Yoshida lifts for $\Delta_K = -7,-8,-11$.

The generating series of dimensions of algebraic modular forms for $\SO(6)$ can be computed using Molien series as discussed earlier. The results are as follows:

\begin{enumerate}

\item For $\Delta_K=-3$, we have
\begin{align*}
\sum_{\nu=0}^{\infty}\dim \fM_{\nu}(\cL_K)t^{\nu} & =
\frac{(1+t^{14})(1+t^{36})}{(1-t^6)(1-t^8)(1-t^{10})(1-t^{12})(1-t^{18})};
\\
\sum_{\nu=0}^{\infty}\dim \fM_{\nu}(\cL_K,\spin_3)t^{\nu}& =
\frac{t^5 (1 + t^4)(1 + t^{36})}{(1-t^6)(1-t^8)(1-t^{10})(1-t^{12})
(1-t^{18})}.
\end{align*}

\item For $\Delta_K=-4$, we have 
\begin{align*}
\sum_{\nu=0}^{\infty}\dim\fM_{\nu}(\cL_K)t^{\nu}=
\frac{1+t^{36}}{(1-t^4)(1-t^6)(1-t^8)(1-t^{10})(1-t^{12})};\\
\sum_{\nu=0}^{\infty}\dim \fM_{\nu}(\cL_K,\spin_2)t^{\nu}
=
\frac{t^6+t^{30}}{(1-t^4)(1-t^6)(1-t^8)(1-t^{10})(1-t^{12})}.
\end{align*}

\item For $\Delta_K=-7$, we have 
\begin{align*}
\sum_{\nu=0}^{\infty}\dim\fM_{\nu}(\cL_K)t^{\nu} & =
\frac{1+t^8+t^{10}+t^{12}+t^{24}+t^{26}+t^{28}+t^{36}}{(1-t^4)(1-t^6)^2(1-t^{10})(1-t^{14})};
\\ 
\sum_{\nu=0}^{\infty}\dim \fM_{\nu}(\cL_K,\spin_7)t^{\nu}& =\frac{t^3+t^5+t^7+t^{15}+t^{21}+t^{29}+t^{31}
+t^{33}}{(1-t^4)(1-t^6)^2(1-t^{10})(1-t^{14})}.
\end{align*}

\item For $\Delta_K=-8$, we have 
\begin{align*}
\sum_{\nu=0}^{\infty}\dim \fM_{\nu}(\cL_K)t^{\nu} & =
\frac{1 + t^{26}}{(1 - t^2)(1 -t^4)(1 - t^6)(1 - t^8)(1 - t^{10})};
\\
\sum_{\nu=0}^{\infty}\dim \fM_{\nu}(\cL_K,\spin_2)t^{\nu}& =\frac{t^5+t^{21}}{(1 - t^2)(1 -t^4)(1 - t^6)(1 - t^8)(1 - t^{10})}.
\end{align*}

\item For $\Delta_K=-11$, we have 
\begin{align*}
\sum_{\nu=0}^{\infty}\dim\fM_{\nu}(\cL_K)t^{\nu} & =
\frac{(1 + t^6)(1 + t^{20})}{(1 - t^2)(1 - t^4)(1 - t^6)(1 - t^8)(1 -  t^{10})};
\\
\sum_{\nu=0}^{\infty}\dim \fM_{\nu}(\cL_K,\spin_{11})t^{\nu}& =\frac{t(1+t^4)(1 + t^{20})}{(1 - t^2)(1 - t^4)(1 - t^6)(1 - t^8)(1 - t^{10})}.
\end{align*}

\end{enumerate}

\begin{rem} The dimension formulas for discriminant $\Delta_K = -3,-4,-7,-8$ can be interpreted in terms of the Weyl groups of semisimple Lie algebras. The genera of rank six lattices in these cases consist of a single class, represented by the root lattices of the Lie algebras of type $E_6$, $C_6$, $A_6$ and $C_5 \oplus A_1$, respectively, and the orthogonal group is the Weyl groups of the algebra. An algebraic modular form (for the group $\mathrm{O}(6)$) is the same as a Weyl-invariant harmonic polynomial. Here the root lattice of type $C_n$ is the same as the root lattice of type $D_n$, but the Weyl groups $W(C_n)$ and $W(D_n)$ are different. The Weyl-invariant polynomials form a polynomial ring that is given explicitly by the Harish--Chandra isomorphism. This explains the dimension formulas and it is also a useful point of view for calculations of eigenforms and their $L$-functions. \\

For example, the fundamental invariants of the Weyl group $W(E_6)$ are polynomials of degree $k = 2,5,6,8,9,12$. They can be chosen to have the form $$p_k = \sum_{i=1}^{27} \ell_i^k, \quad k \in \{2,5,6,8,9,12\}$$ where $\ell_i$ are 27 linear forms (essentially the 27 lines on a cubic surface in $\mathbb{P}^3$), see \cite{Mehta1988}. On the other hand, the polynomials of degrees $5,6,8,9,12$ can be chosen to be harmonic, such that they define algebraic modular forms for $\mathrm{O}(6)$ (with spinor character $\mathrm{spin}_3$ if the degree is odd, and trivial character otherwise). To obtain an algebraic modular form with the determinant character on $\mathrm{O}(6)$, one can take the Jacobian determinant $$\Psi_{36} = \mathrm{det} \Big( \nabla p_2, \nabla p_5, \nabla p_6, \nabla p_8, \nabla p_9, \nabla p_{12} \Big)$$ which has degree $36$. This has minimal degree among polynomials that are invariant up to the determinant character, so it is automatically harmonic. \\
The spaces of algebraic modular forms in fact have a graded ring structure, with multiplication defined by $$f \times g := \pi_{\mathrm{har}}(fg)$$ where $\pi_{\mathrm{har}}$ is the harmonic projection (i.e. $\pi_{\mathrm{har}}(f)$ is the unique harmonic polynomial for which $f - \pi_{\mathrm{har}}(f)$ is a multiple of the quadratic form $\langle x, x \rangle$). Since $p_5, p_6, p_8, p_9, p_{12}$ and $\langle -, - \rangle$ (which is essentially $p_2$) are algebraically independent, it follows that $p_5,p_6,p_8,p_9,p_{12}$ freely generate the \emph{ring} $\mathfrak{M}_*(\mathrm{O}(E_6), \mathrm{spin}_*)$, and that $\Psi_{36}$ generates the module of forms with the determinant character. From this we obtain $$\sum_{\nu=0}^{\infty} \mathrm{dim}\, \mathfrak{M}_{\nu}(\Spin(\cL_K)) t^{\nu} = \frac{1 + t^{36}}{(1 - t^5)(1 - t^6)(1 - t^8)(1 - t^9)(1 - t^{12})}.$$
The computation for the other root lattices is similar.
\end{rem}

\begin{rem}
    Conjecture \ref{conj:main} predicts that the Jacobian determinant $\Psi_{36}$ corresponds to a Hermitian cusp form eigenform of weight 40 for the field $K = \mathbb{Q}(\sqrt{-3})$, with all Hecke eigenvalues in $\mathbb{Q}$. By computing Hermitian modular forms for $\mathbb{Q}(\sqrt{-3})$ in terms of the generators described in \cite{DK1}, we were able to split the space of cusp forms of weight 40 into eigenforms and we found that there really is a rational eigenform in weight 40. The beginning of its Fourier series is available on the second author's webpage \cite{github}; a remarkable number of its Fourier coefficients turn out to vanish.
\end{rem}

\begin{thrm}
Let $\Delta_K \in \{-3, -4, -7, -8, -11\}$.
The dimensions of modular forms are related as follows:
\begin{align*}
& \sum_{\nu=0}^{\infty}
\dim M_{\nu+4}(\Gamma_K)t^{\nu+4}-\Eis-\KE-\MW
\\ & =\sum_{\nu=0}^{\infty}(\dim \fM_{\nu}(\Spin(\cL_K)))t^{\nu+4}
-\dim S_3(\Gamma_0(|\Delta_K|),\chi_k)\times \mathrm{Y}\cdot t^4,
\end{align*}
and
\begin{align*} \sum_{\nu=0}^{\infty}
\dim M_{\nu+4}(\Gamma_K^*)t^{\nu+4}-\Eis-\KE-\MW
=\sum_{\nu=0}^{\infty}\dim \fM_{\nu}(\cL_K)t^{\nu+4}.
\end{align*}

\end{thrm}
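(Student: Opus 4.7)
The plan is to prove both identities by direct verification as equalities of rational functions in $t$, proceeding case-by-case for each $\Delta_K \in \{-3,-4,-7,-8,-11\}$. All the relevant generating series are either listed in the paper or easily extracted from what is listed: $\sum_k \dim M_k(\Gamma_K)\, t^k$ is given by the dimension theorem in Section~1; $\sum_\nu \dim \fM_\nu(\cL_K)\, t^\nu$ and $\sum_\nu \dim \fM_\nu(\cL_K,\spin_{d_0})\, t^\nu$ are listed just above; and $\Eis, \KE, \MW, \mathrm{Y}$ are the rational functions defined just before the statement.

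First I would pin down $\dim S_3(\Gamma_0(|\Delta_K|),\chi_K)$ in each case: it is $0$ for $\Delta_K \in \{-3,-4\}$ (no weight $3$ newforms at these tiny levels) and $1$ for $\Delta_K \in \{-7,-8,-11\}$, corresponding to the CM forms $h$ listed in Conjecture~\ref{conj:mainconj}. Since each of these five discriminants is a prime discriminant, $\fM_\nu(\Spin(\cL_K))$ is just the direct sum $\fM_\nu(\cL_K) \oplus \fM_\nu(\cL_K,\spin_{|\Delta_K|})$, whose Hilbert series is obtained by adding the two rational functions tabulated in Section~5.2. With these ingredients, the first identity reduces for each $\Delta_K$ to a polynomial identity after clearing denominators, which can be checked by direct expansion (most conveniently in a computer algebra system).

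For the second identity, the extra input needed is the Hilbert series $\sum_k \dim M_k(\Gamma_K^*)\, t^k$. When $-\Delta_K$ is an odd prime ($\Delta_K \in \{-3,-7,-11\}$), the discussion in Section~5.1 shows $M_k(\Gamma_K^*) = M_k(\Gamma_K)$ for even $k$ and zero for odd $k$, so the series is the even-power part of the series for $M_k(\Gamma_K)$. For $\Delta_K = -4$, the Aoki formula for $\dim M_k(\U(2,2;\cO_K))$ quoted at the end of Section~5.1 supplies the series directly. For $\Delta_K = -8$ the paper does not list the series, so one must compute it as the $+1$-eigenspace of the unique nontrivial Atkin--Lehner involution $W_8$ on $M_k(\Gamma_K)$; the cleanest route is to go through Jacobi forms as in Appendix~A, tracking the $W_8$-action, or alternatively to use the Dern--Krieg generators and determine their individual $W_8$-eigenvalues.

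The main obstacle is precisely this $\Delta_K = -8$ case in the second identity: the other four cases require only bookkeeping of rational functions already supplied in the paper, whereas here one has to carry out a nontrivial Atkin--Lehner decomposition on a space whose ring structure is only partially known. Once the Hilbert series of $M_k(\Gamma_K^*)$ for $\Delta_K=-8$ is in hand, what remains is a purely mechanical verification of five rational-function identities, which I expect to hold on the nose for all five discriminants, confirming the theorem and providing strong evidence for Conjecture~\ref{conj:main}.
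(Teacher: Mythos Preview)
Your approach is correct and is precisely the paper's (implicit) approach: the theorem is stated without proof because it is a direct verification of rational-function identities, all of whose ingredients have been tabulated in Sections~1 and~5.

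However, the obstacle you identify for $\Delta_K=-8$ in the second identity is illusory. Since $-8$ is a prime discriminant (its only prime divisor is $2$), we have $t=1$ and $[\Gamma_K^*:\Gamma_K]=2$. The paper already notes (Section~5.1) that $i\cdot 1_4\in\Gamma_K W_m$ for every $K$, with $m=2$ here; since $K\neq\Q(i)$ we have $i\cdot 1_4\notin\Gamma_K$, so the nontrivial coset $\Gamma_K W_2$ equals $\Gamma_K\cdot(i\,1_4)$. Exactly as for the odd-prime discriminants, invariance under the scalar $i\cdot 1_4$ forces $M_k(\Gamma_K^*)=M_k(\Gamma_K)$ for even $k$ and $M_k(\Gamma_K^*)=0$ for odd $k$. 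Thus $\sum_k\dim M_k(\Gamma_K^*)\,t^k$ is simply the even-power part $\tfrac12\bigl(F(t)+F(-t)\bigr)$ of the Hilbert series $F(t)$ from Theorem~(iv), and no separate Atkin--Lehner computation is needed. With this correction, all five cases of both identities reduce to the mechanical rational-function checks you describe.
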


Remark. We have $\dim S_3(\Gamma_0(|\Delta_K|),\chi_K)=1$ if 
$\Delta_K=-7$, $-8$, $-11$ and $\dim S_3(\Gamma_0(|\Delta_K|),\chi_K)=0$ if $\Delta_K=-3$, $-4$.

\bibliographystyle{plainnat}
\bibliofont

\appendix

\section{The dimension formula for \texorpdfstring{$\mathbb{Q}(\sqrt{-2})$}{QQ(sqrt -2)}}

A set of generators for the graded ring of Hermitian modular forms over $\mathbb{Q}(\sqrt{-2})$ was computed by Dern and Krieg \cite{DK2}, but they do not give a formula for the dimensions of spaces of modular forms and it does not seem to be easy to derive it from their work. However, the generating series for $\mathrm{dim}\, M_k(\Gamma_K)$ can be computed directly by means of Jacobi forms. This is an application of the argument of section 6 of \cite{WW2024} which computes dimensions of spaces of modular forms associated to reducible root lattices. \\

The Fourier--Jacobi expansion of an arbitrary modular form for $\Gamma_K$ takes the form $$F\Big( \begin{pmatrix} \tau & z_1 \\ z_2 & w \end{pmatrix} \Big) = \sum_{n=0}^{\infty} f_n(\tau, z_1, z_2) e^{2\pi i n w},$$ where each $f_n$ is a Hermitian Jacobi form of index $n$, which is essentially the same (up to a change of variables) as a Jacobi form of index $n$ on the lattice $\mathcal{O}_K$  (cf. \cite{Haverkamp96}); more precisely, the functions $$f_n(\tau, w_1, w_2) := f_n\Big( \tau, \frac{z_1 + z_2}{2}, \frac{z_1 - z_2}{2i} \Big)$$ 
are Jacobi forms of lattice index. By abuse of notation we denote the latter Jacobi form by $f_n$ also. For $K = \mathbb{Q}(\sqrt{-2})$ we have an isometry $\mathcal{O}_K \cong A_1 \oplus A_1(2) =: L$. \\

Note that if $F(Z^T) = \varepsilon (-1)^k F(Z)$ with $\varepsilon \in \{\pm 1\}$ then the transformations under $Z \mapsto Z^T$ and under the M\"obius transformation $M = \mathrm{diag}(1,-1,1,-1) \in \Gamma_K$ yield $$f_n(\tau, w_1, -w_2) = \varepsilon (-1)^k f_n(\tau, w_1, w_2), \quad f_n(\tau, -w_1, w_2) = \varepsilon \cdot f_n(\tau, w_1, w_2).$$

If $f$ is symmetric, then the Jacobi forms $f_n$ are invariant under the Weyl group $W(L)$; moreover the Fourier series of the first nonzero term $f_N$ vanishes to $q$-order $N$.  By Theorem 2.4 of \cite{WW2023} it can be written uniquely as a $\mathbb{C}[E_4,E_6]$-linear combination of forms $$f_N(\tau, w_1, w_2) = \Delta^N(\tau) \psi_1(\tau, w_1) \psi_2(\tau, w_2),$$ where $\psi_1$ is an even weak Jacobi form of index $N$, and where $\psi_2$ is a weak Jacobi form for the lattice $A_1(2)$ and index $N$ (or equivalently, a weak Jacobi form in the usual sense of index $2N$), which are both monomials in the basic weak Jacobi forms $\phi_{-2, 1}$ and $\phi_{0, 1}$ of index one and the basic weak Jacobi form $\phi_{-1, 2}$ of index two. (For the ring structure of weak Jacobi forms and the three generators $\phi_{k, m}$ see Chapter 9 of \cite{EZ}.)

Conversely every such Jacobi form arises as the leading term of the Fourier--Jacobi series of a holomorphic Hermitian modular form $F$: one can construct $F$ as a product of Gritsenko--Maass lifts. In the terminology of \cite{WW2024}, this is possible because the lattice $L$ satisfies the $\mathrm{Norm}_2$-condition. \\

If on the other hand $f$ is skew-symmetric, then the leading Fourier--Jacobi coefficient $f_N$ is odd with respect to its first elliptic variable and it vanishes to $q$-order at least $N+1$, so can be expressed uniquely as a $\mathbb{C}[E_4,E_6]$-linear combination of Jacobi forms $$f_N(\tau, w_1, w_2) = \Delta^{N+1}(\tau) \phi_{-1, 2}(\tau, w_1) \psi_1(\tau, w_1) \psi_2(\tau, w_2),$$ where $\psi_1$ and $\psi_2$ are monomials in $\phi_{-2, 1}$ and $\phi_{0, 1}$ of index $N-2$ and $2N$, respectively, and where $\phi_{-1, 2}$ is the weak Jacobi form of weight $(-1)$ and index 2 defined in \cite{EZ}. Conversely, one can construct holomorphic Hermitian modular forms with any such leading coefficient $f_N$ by multiplying Borcherds products with certain singular Gritsenko lifts, or quotients of holomorphic Hermitian modular forms; this was done in \cite{DK2}. \\

Note that $$\sum_{k=0}^{\infty} \sum_{m=0}^{\infty} \mathrm{dim}\, J_{k, m}^{\mathrm{weak}} \, t^k x^m = \frac{1 + x^2 / t}{(1 - t^4)(1 - t^6)(1 - x)(1 - x / t^2)}, \quad |x| < |t|^2 < 1.$$

From the above observations we have
\begin{align*} &\quad \frac{1}{(1 - t^4)(1 - t^6)} \sum_{k=0}^{\infty} \mathrm{dim} \, M_k^{\mathrm{sym}}(\Gamma_K) t^k \\ &= \sum_{N=0}^{\infty} \sum_{a=0}^{\infty} \sum_{b=0} ^{\infty} \Big( \mathrm{dim}\, J_{a, N}^{\mathrm{weak}, \mathrm{even}} \Big) \cdot \Big( \mathrm{dim}\, J_{b, 2N}^{\mathrm{weak}} \Big) t^{a+b+12N} \\ &= \frac{1}{2\pi i} \oint \Big( \sum_{a=0}^{\infty} \sum_{m=0}^{\infty} \mathrm{dim}\, J_{a, m}^{\mathrm{weak}, \mathrm{even}} t^a x^{2m} \Big) \Big( \sum_{b=0}^{\infty} \sum_{n=0}^{\infty} \mathrm{dim}\, J_{b, n}^{\mathrm{weak}} t^{b + 6n} x^{-n} \Big) \, \frac{\mathrm{d}x}{x} \\ &= \frac{1}{(1 - t^4)^2 (1 - t^6)^2} \cdot \frac{1}{2\pi i}  \oint \frac{(1 + x^{-2} t^{11})}{(1 - x^2)(1 - t^6/x)(1 - x^2 / t^2)(1 - t^4 / x)} \, \frac{\mathrm{d}x}{x} \\ &=: \frac{1}{(1 - t^4)^2 (1 - t^6)^2} \cdot \frac{1}{2\pi i} \oint \, \omega. \end{align*} 

For fixed $t$, we integrate along a circle in the $x$-plane centered at $0$ of any radius $\varepsilon$ for which $|t|^4 < \varepsilon < |t|$ (i.e. within the annulus on which the integrand converges as a Laurent series). Therefore only the singularities at $x \in \{0,t^4,t^6\}$ contribute to the dimension integral. We have $$\mathrm{Res}_{x=0} (\omega) = t, \quad \mathrm{Res}_{x=t^4}(\omega) = \frac{1}{(1 - t^2)(1 - t^3)(1 - t^8)}, \quad \mathrm{Res}_{x=t^6}(\omega) = -\frac{t}{(1 - t)(1 - t^{10})(1 - t^{12})},$$ hence $$\sum_{k=0}^{\infty} \mathrm{dim} \, M_k^{\mathrm{sym}}(\Gamma_K) t^k = \frac{1}{(1 - t^4)(1 - t^6)} \Big[ t + \frac{1}{(1 - t^2)(1 - t^3)(1 - t^8)} - \frac{t}{(1 - t)(1 - t^{10})(1 - t^{12})} \Big].$$

By an analogous computation, \begin{align*} &\quad \frac{1}{(1 - t^4)(1 - t^6)} \sum_{k=0}^{\infty} \mathrm{dim}\, M_k^{\mathrm{skew}}(\Gamma_K) t^k \\ &= \sum_{N=0}^{\infty} \sum_{a=0}^{\infty} \sum_{b=0}^{\infty} \Big( \mathrm{dim}\, J_{a, N}^{\mathrm{weak}, \mathrm{odd}} \Big) \cdot \Big( \mathrm{dim}\, J_{b, 2N}^{\mathrm{weak}} \Big) t^{a+b+12N+12} \\ &= \frac{t^{12}}{(1 - t^4)^2(1 - t^6)^2} \cdot \frac{1}{2\pi i} \oint \frac{x^4 t^{-1} (1 + x^{-2} t^{11})}{(1 - x^2)(1 - t^6 / x)(1 - x^2 / t^2)(1 - t^4 / x)} \, \frac{\mathrm{d}x}{x}.\end{align*}

The integrand now has no pole at $x=0$ and, compared with the symmetric case, the residues at the poles $x=t^4$ and $x = t^6$ are multiplied by $t^{15}$ and $t^{23}$, respectively. Therefore $$\sum_{k=0}^{\infty} \mathrm{dim} \, M_k^{\mathrm{skew}}(\Gamma_K) t^k = \frac{t^{12}}{(1 - t^4)(1 - t^6)} \Big[ \frac{t^{15}}{(1 - t^2)(1 - t^3)(1 - t^8)} - \frac{t^{24}}{(1 - t)(1 - t^{10})(1 - t^{12})} \Big].$$

Altogether we have $$\sum_{k=0}^{\infty} \mathrm{dim} \, M_k(\Gamma_K) t^k = \frac{1}{(1 - t^4)(1 - t^6)} \Big[ t + \frac{1 + t^{27}}{(1 - t^2)(1 - t^3)(1 - t^8)} - \frac{t + t^{36}}{(1 - t)(1 - t^{10})(1 - t^{12})} \Big].$$
\section{Tables of Hermitian eigenforms}

In the appendix, we describe the decomposition of $M_k(\Gamma_K)$, $K = \mathbb{Q}(\sqrt{\Delta})$ for the five smallest discriminants $\Delta$ as predicted by Conjecture \ref{conj:mainconj}. In particular, we work out the dimensions of the subspaces of Miyawaki lifts and those which should correspond to algebraic modular forms on $\SO(6)$ twisted by characters.

Hermitian eigenforms that are not cusp forms are of two types: Klingen Eisenstein series, which are induced from cusp forms on $\SL_2(\mathbb{Z})$, and the true Eisenstein series, which is the Gritsenko--Maass lift of a Jacobi Eisenstein series (of lattice index $\mathcal{O}_K$). The dimensions of these spaces are $$\mathrm{dim}\, M_k^{\mathrm{Klingen}} = \mathrm{dim}\, S_k(\SL_2(\mathbb{Z})),$$ and $\mathrm{dim}\, M_k^{\mathrm{Eis}} = 1$ (if $k \ge 4$ is even) or $\mathrm{dim}\, M_k^{\mathrm{Eis}} = 0$ otherwise. For Eisenstein series (of either type) on unitary groups see \cite{Shimura97}.

$\Maass(\chi)$ stands for Hermitian modular forms that (conjecturally) correspond to eigenforms on $\SO(6)$ with nonzero image under the theta map and spin character $\chi$. (If $\chi = 1$ then we omit it.) The character $\chi$ equivalently describes the eigenvalues under the Atkin--Lehner involutions $W_d$. By a theorem of Wernz \cite{Wernz2020}, these forms span the Sugano Maass space as $\chi$ runs through spinor characters and the subspace with trivial spin character is exactly Krieg's Maass space. 
An explicit dimension formula for the Maass space was given by Sugano \cite{sugano}; see also Haverkamp \cite{Haverkamp96}.

``$\MW$" is spanned by Miyawaki lifts, or those eigenforms whose degree six $L$-functions factor as $$L(F, s) = \zeta_K(s - k + 2) \cdot L(f \otimes g; s),$$ where $f \in S_k(\SL_2(\mathbb{Z}))$ and $g \in S_{k-2}(\SL_2(\mathbb{Z}))$.

$\G_k(\chi)$ (for ``general") stands for non-Maass cuspidal Hermitian eigenforms which (conjecturally) correspond to algebraic modular forms on $\SO(6)$ of weight $(k-4)$ (twisted by $\chi$, a product of spinor characters and possibly $\mathrm{det}$) that belong to the kernel of the theta map. These are expected to be orthogonal to all Maass and Miyawaki lifts.

$\color{red} \mathrm{Y}_k$ represents (conjectural) Yoshida lifts of weight $(k-4)$, which are automorphic forms on $\SO(6)$ that should not have an associated Hermitian modular form (and therefore do not contribute to the total dimension). These forms are listed only for completeness.

In all cases, we found that the conjectural decomposition coincides exactly with the factorization of the minimal polynomial of a single Hecke operator $T_p$ over $\mathbb{Q}$. (Namely the operator $T_2$ if $2 \nmid \Delta_K$ and $T_3$ otherwise. We use the smallest $p$ with $p \nmid \Delta_K$ because its action on modular forms can be computed with the smallest number of Fourier coefficients.)

\begin{figure}

\begin{table}[H]
\begin{tabular}{|l|l|l|l|l|l|l|l|l|l|l|l|l|l|l|l|l|l|l|l|l|}
\hline
                                                                  & 1 & 2 & 3 & 4 & 5 & 6 & 7 & 8 & 9 & 10 & 11 & 12 & 13 & 14 & 15 & 16 & 17 & 18 & 19 & 20 \\ \hline
Eisenstein                                                        & 0 & 0 & 0 & 1 & 0 & 1 & 0 & 1 & 0 & 1  & 0  & 1  & 0  & 1  & 0  & 1  & 0  & 1 & 0 & 1 \\ \hline
Klingen                                                & 0 & 0 & 0 & 0 & 0 & 0 & 0 & 0 & 0 & 0  & 0  & 1  & 0  & 0  & 0  & 1  & 0  & 1  & 0 & 1 \\ \hline
Maass                                                             & 0 & 0 & 0 & 0 & 0 & 0 & 0 & 0 & 0 & 1  & 0  & 1  & 0  & 1  & 0  & 2  & 0  & 2 & 0 & 2 \\ \hline
$\Maass(\spin_3)$              & 0 & 0 & 0 & 0 & 0 & 0 & 0 & 0 & 1 & 0  & 0  & 0  & 1  & 0  & 1  & 0  & 1  & 0 & 1 & 0 \\ \hline
$\MW$                                                            & 0 & 0 & 0 & 0 & 0 & 0 & 0 & 0 & 0 & 0  & 0  & 0  & 0  & 0  & 0  & 0  & 0  & 1 & 0 & 1 \\ \hline
$\G$                                              & 0 & 0 & 0 & 0 & 0 & 0 & 0 & 0 & 0 & 0  & 0  & 0  & 0  & 0  & 0  & 0  & 0  & 0 & 0 & 0 \\ \hline
$\G(\spin_3)$                      & 0 & 0 & 0 & 0 & 0 & 0 & 0 & 0 & 0 & 0  & 0  & 0  & 0  & 0  & 0  & 0  & 0  & 0 & 1 & 0 \\ \hline
$\G(\mathrm{det})$                         & 0 & 0 & 0 & 0 & 0 & 0 & 0 & 0 & 0 & 0  & 0  & 0  & 0  & 0  & 0  & 0  & 0  & 0 & 0 & 0 \\ \hline
$\G(\spin_3 \otimes \mathrm{det})$ & 0 & 0 & 0 & 0 & 0 & 0 & 0 & 0 & 0 & 0  & 0  & 0  & 0  & 0  & 0  & 0  & 0  & 0 & 0 & 0 \\ \hline
Total                                                   & 0 & 0 & 0 & 1 & 0 & 1 & 0 & 1 & 1 & 2  & 0  & 3  & 1  & 2  & 1  & 4  & 1  & 5 & 2 & 5 \\ \hline
\end{tabular}
\end{table}

\begin{small}
\begin{table}[H]
\begin{tabular}{|l|l|l|l|l|l|l|l|l|l|l|l|l|l|l|l|l|l|l|l|l|}
\hline
                                            & 21 & 22 & 23 & 24 & 25 & 26 & 27 & 28 & 29 & 30 & 31 & 32 & 33 & 34 & 35 & 36 & 37 & 38 & 39 & 40 \\ \hline
Eisenstein                                  & 0  & 1  & 0  & 1  & 0  & 1  & 0  & 1  & 0  & 1  & 0  & 1  & 0  & 1  & 0  & 1 & 0 & 1 & 0 & 1  \\ \hline
Klingen                        & 0  & 1  & 0  & 2  & 0  & 1  & 0  & 2  & 0  & 2  & 0  & 2  & 0  & 2  & 0  & 3 & 0 & 2 & 0 & 3  \\ \hline
Maass                                        & 0  & 3  & 0  & 3  & 0  & 3  & 0  & 4  & 0  & 4  & 0  & 4  & 0  & 5  & 0  & 5 & 0 & 5 & 0 & 6 \\ \hline
$\Maass(\spin_3)$             & 2  & 0  & 1  & 0  & 2  & 0  & 2  & 0  & 2  & 0  & 2  & 0  & 3  & 0  & 2  & 0 & 3 & 0 & 3 & 0 \\ \hline
$\MW$                                        & 0  & 1  & 0  & 2  & 0  & 2  & 0  & 2  & 0  & 4  & 0  & 4  & 0  & 4  & 0  & 6 & 0 & 6 & 0 & 6 \\ \hline
$\G$                                       & 0  & 1  & 0  & 1  & 0  & 1  & 0  & 3  & 0  & 3  & 0  & 3  & 6  & 6  & 0  & 7 & 0 & 7 & 0 & 11 \\ \hline
$\G(\spin_3)$                        & 1  & 0  & 1  & 0  & 2  & 0  & 3  & 0  & 3  & 0  & 5  & 0  & 0  & 0  & 6  & 0 & 9 & 0 & 11 & 0 \\ \hline
$\G(\mathrm{det})$                           & 0  & 0  & 0  & 0  & 0  & 0  & 0  & 0  & 0  & 0  & 0  & 0  & 0  & 0  & 0  & 0 & 0 & 0 & 0 & 1 \\ \hline
$\G(\spin_3 \otimes \mathrm{det})$  & 0  & 0  & 0  & 0  & 0  & 0  & 0  & 0  & 0  & 0  & 0  & 0  & 0  & 0  & 0  & 0 & 0 & 0 & 0 & 0 \\ \hline
Total                              & 3  & 7  & 2  & 9  & 4  & 8  & 5  & 12 & 5  & 14 & 7  & 14 & 9  & 18 & 8  & 22 & 12 & 21 & 14 & 28 \\ \hline
\end{tabular}
\end{table}

\end{small}

\caption{Hermitian eigenforms for discriminant $-3$. Note that the first eigenform with spinor character $\spin_3 \otimes \mathrm{det}$ occurs in weight 45 and therefore does not appear in the table.}

\end{figure}

\begin{figure}
\begin{small}

\begin{table}[H]
\begin{tabular}{|l|l|l|l|l|l|l|l|l|l|l|l|l|l|l|l|l|l|l|l|l|}
\hline
                                                                  & 1 & 2 & 3 & 4 & 5 & 6 & 7 & 8 & 9 & 10 & 11 & 12 & 13 & 14 & 15 & 16 & 17 & 18 & 19 & 20 \\ \hline
Eisenstein                                                        & 0 & 0 & 0 & 1 & 0 & 1 & 0 & 1 & 0 & 1  & 0  & 1  & 0  & 1  & 0  & 1  & 0  & 1 & 0 & 1 \\ \hline
Klingen                                                & 0 & 0 & 0 & 0 & 0 & 0 & 0 & 0 & 0 & 0  & 0  & 1  & 0  & 0  & 0  & 1  & 0  & 1  & 0 & 1 \\ \hline
Maass                                                             & 0 & 0 & 0 & 0 & 0 & 0 & 0 & 1 & 0 & 1  & 0  & 2  & 0  & 2  & 0  & 3  & 0  & 3 & 0 & 4 \\ \hline
$\Maass(\spin_2)$              & 0 & 0 & 0 & 0 & 0 & 0 & 0 & 0 & 0 & 1  & 0  & 0  & 0  & 1  & 0  & 1  & 0  & 1 & 0 & 1 \\ \hline
$\MW$                                                            & 0 & 0 & 0 & 0 & 0 & 0 & 0 & 0 & 0 & 0  & 0  & 0  & 0  & 0  & 0  & 0  & 0  & 1 & 0 & 1 \\ \hline
$\G$                                              & 0 & 0 & 0 & 0 & 0 & 0 & 0 & 0 & 0 & 0  & 0  & 0  & 0  & 0  & 0  & 1  & 0  & 0 & 0 & 2 \\ \hline
$\G(\spin_2)$                      & 0 & 0 & 0 & 0 & 0 & 0 & 0 & 0 & 0 & 0  & 0  & 0  & 0  & 0  & 0  & 0  & 0  & 1 & 0 & 1 \\ \hline
$\G(\mathrm{det})$                         & 0 & 0 & 0 & 0 & 0 & 0 & 0 & 0 & 0 & 0  & 0  & 0  & 0  & 0  & 0  & 0  & 0  & 0 & 0 & 0 \\ \hline
$\G(\spin_2 \otimes \mathrm{det})$ & 0 & 0 & 0 & 0 & 0 & 0 & 0 & 0 & 0 & 0  & 0  & 0  & 0  & 0  & 0  & 0  & 0  & 0 & 0 & 0 \\ \hline
Total                                                   & 0 & 0 & 0 & 1 & 0 & 1 & 0 & 2 & 0 & 3  & 0  & 4  & 0  & 4  & 0  & 7  & 0  & 8 & 0 & 11 \\ \hline
\end{tabular}
\end{table}

\begin{table}[H]
\begin{tabular}{|l|l|l|l|l|l|l|l|l|l|l|l|l|l|l|l|l|l|l|l|l|}
\hline
                                                                  & 21 & 22 &23 & 24 & 25 & 26 & 27 & 28 & 29 & 30 & 31 & 32 & 33 & 34 & 35 & 36 & 37 & 38 & 39 & 40 \\ \hline
Eisenstein                                                        & 0 & 1 & 0 & 1 & 0 & 1 & 0 & 1 & 0 & 1  & 0  & 1  & 0  & 1  & 0  & 1  & 0  & 1 & 0 & 1 \\ \hline
Klingen                                                & 0  & 1  & 0  & 2  & 0  & 1  & 0  & 2  & 0  & 2  & 0  & 2  & 0  & 2  & 0  & 3 & 0 & 2 & 0 & 3  \\ \hline
Maass                                                             & 0 & 4 & 0 & 5 & 0 & 5 & 0 & 6 & 0 & 6  & 0  & 7  & 0  & 7  & 0  & 8  & 0  & 8 & 0 & 9 \\ \hline
$\Maass(\spin_2)$              & 0 & 2 & 0 & 1 & 0 & 2 & 0 & 2 & 0 & 2  & 0  & 2  & 0  & 3  & 0  & 2  & 0  & 3 & 0 & 3 \\ \hline
$\MW$                                                            & 0  & 1  & 0  & 2  & 0  & 2  & 0  & 2  & 0  & 4  & 0  & 4  & 0  & 4  & 0  & 6 & 0 & 6 & 0 & 6 \\ \hline
$\G$                                              & 0 & 2 & 0 & 4 & 0 & 4 & 0 & 8 & 0 & 7  & 0  & 12  & 0  & 13  & 0  & 18  & 0  & 19 & 0 & 27 \\ \hline
$\G(\spin_2)$                      & 0 & 2 & 0 & 2 & 0 & 4 & 0 & 4 & 0 & 7  & 0  & 7  & 0  & 11  & 0  & 11  & 0  & 16 & 0 & 17 \\ \hline
$\G(\mathrm{det})$                         & 0 & 0 & 0 & 0 & 0 & 0 & 0 & 0 & 0 & 0  & 0  & 0  & 0  & 0  & 0  & 0  & 0  & 0 & 0 & 1 \\ \hline
$\G(\spin_2 \otimes \mathrm{det})$ & 0 & 0 & 0 & 0 & 0 & 0 & 0 & 0 & 0 & 0  & 0  & 0  & 0  & 1  & 0  & 0  & 0  & 1 & 0 & 1 \\ \hline
Total                                                   & 0 & 13 & 0 & 17 & 0 & 19 & 0 & 25 & 0 & 29  & 0  & 35  & 0  & 42  & 0  & 49  & 0  & 56 & 0 & 68 \\ \hline
\end{tabular}
\end{table}

\caption{Hermitian eigenforms for discriminant $-4$}
\end{small}
\end{figure}

\begin{figure}
    
\begin{table}[H]
\begin{tabular}{|l|l|l|l|l|l|l|l|l|l|l|l|l|l|l|l|l|l|l|l|l|}
\hline
                                                                  & 1 & 2 & 3 & 4 & 5 & 6 & 7 & 8 & 9 & 10 & 11 & 12 & 13 & 14 & 15 & 16 & 17 & 18 & 19 & 20 \\ \hline
Eisenstein                                                        & 0 & 0 & 0 & 1 & 0 & 1 & 0 & 1 & 0 & 1  & 0  & 1  & 0  & 1  & 0  & 1  & 0  & 1 & 0 & 1 \\ \hline
Klingen                                                & 0 & 0 & 0 & 0 & 0 & 0 & 0 & 0 & 0 & 0  & 0  & 1  & 0  & 0  & 0  & 1  & 0  & 1 & 0 & 1 \\ \hline
Maass                                                             & 0 & 0 & 0 & 0 & 0 & 0 & 0 & 1 & 0 & 2  & 0  & 2  & 0  & 3  & 0  & 4  & 0  & 4  & 0 & 5\\ \hline
$\Maass(\spin_7)$              & 0 & 0 & 0 & 0 & 0 & 0 & 1 & 0 & 1 & 0  & 2  & 0  & 2  & 0  & 3  & 0  & 3  & 0 & 4 & 0  \\ \hline
$\MW$                                                            & 0 & 0 & 0 & 0 & 0 & 0 & 0 & 0 & 0 & 0  & 0  & 0  & 0  & 0  & 0  & 0  & 0  & 1 & 0 & 1 \\ \hline
$\G$                                              & 0 & 0 & 0 & 0 & 0 & 0 & 0 & 0 & 0 & 0  & 0  & 0  & 0  & 1  & 0  & 2  & 0  & 3 & 0 & 5  \\ \hline
$\G(\spin_7)$                      & 0 & 0 & 0 & 0 & 0 & 0 & 0 & 0 & 0 & 0  & 0  & 0  & 0  & 0  & 1  & 0  & 2  & 0 & 4 & 0 \\ \hline
$\G(\mathrm{det})$                         & 0 & 0 & 0 & 0 & 0 & 0 & 0 & 0 & 0 & 0  & 0  & 0  & 0  & 0  & 0  & 0  & 0  & 0 & 0 & 0 \\ \hline
$\G(\spin_7 \otimes \mathrm{det})$ & 0 & 0 & 0 & 0 & 0 & 0 & 0 & 0 & 0 & 0  & 0  & 0  & 0  & 0  & 0  & 0  & 0  & 0 & 0 & 0 \\ \hline
$\color{red}\mathrm{Y}$ 				       & 0 & 0 & 0 & 0 & 0 & 0 & 0 & 0 & 0 & 0  & 0  & 0  & $\color{red} 1$  & 0  & 0  & 0  & $\color{red} 1$  & 0 & $\color{red} 1$ & 0 \\ \hline
Total                                                   & 0 & 0 & 0 & 1 & 0 & 1 & 1 & 2 & 1 & 3  & 2  & 4  & 2  & 5  & 4  & 8  & 5  & 10 & 8 & 13 \\ \hline
\end{tabular}
\end{table}

\caption{Hermitian eigenforms for discriminant $-7$}

\end{figure}

\begin{figure}
\begin{table}[H]
\begin{tabular}{|l|l|l|l|l|l|l|l|l|l|l|l|l|l|l|l|l|l|l|l|l|}
\hline
                                                                  & 1 & 2 & 3 & 4 & 5 & 6 & 7 & 8 & 9 & 10 & 11 & 12 & 13 & 14 & 15 & 16 & 17 & 18 & 19 & 20 \\ \hline
Eisenstein                                                        & 0 & 0 & 0 & 1 & 0 & 1 & 0 & 1 & 0 & 1  & 0  & 1  & 0  & 1  & 0  & 1  & 0  & 1 & 0 & 1 \\ \hline
Klingen                                                & 0 & 0 & 0 & 0 & 0 & 0 & 0 & 0 & 0 & 0  & 0  & 1  & 0  & 0  & 0  & 1  & 0  & 1 & 0 & 1 \\ \hline
Maass                                                             & 0 & 0 & 0 & 0 & 0 & 1 & 0 & 2 & 0 & 3  & 0  & 4  & 0  & 5  & 0  & 6  & 0  & 7  & 0 & 8 \\ \hline
$\Maass(\spin_2)$              & 0 & 0 & 0 & 0 & 0 & 0 & 0 & 0 & 1 & 0  & 1  & 0  & 1  & 0  & 2  & 0  & 2  & 0 & 2 & 0  \\ \hline
$\MW$                                                            & 0 & 0 & 0 & 0 & 0 & 0 & 0 & 0 & 0 & 0  & 0  & 0  & 0  & 0  & 0  & 0  & 0  & 1 & 0 & 1 \\ \hline
$\G$                                              & 0 & 0 & 0 & 0 & 0 & 0 & 0 & 0 & 0 & 0  & 0  & 1  & 0  & 2  & 0  & 4  & 0  & 6 & 0 & 10  \\ \hline
$\G(\spin_2)$                      & 0 & 0 & 0 & 0 & 0 & 0 & 0 & 0 & 0 & 0  & 0  & 0  & 0  & 0  & 1  & 0  & 2  & 0 & 4 & 0 \\ \hline
$\G(\mathrm{det})$                         & 0 & 0 & 0 & 0 & 0 & 0 & 0 & 0 & 0 & 0  & 0  & 0  & 0  & 0  & 0  & 0  & 0  & 0 & 0 & 0 \\ \hline
$\G(\spin_2 \otimes \mathrm{det})$ & 0 & 0 & 0 & 0 & 0 & 0 & 0 & 0 & 0 & 0  & 0  & 0  & 0  & 0  & 0  & 0  & 0  & 0 & 0 & 0 \\ \hline
$\color{red}\mathrm{Y}$ 				       & 0 & 0 & 0 & 0 & 0 & 0 & 0 & 0 & 0 & 0  & 0  & 0  & $\color{red} 1$  & 0  & 0  & 0  & $\color{red} 1$  & 0 & $\color{red} 1$ & 0 \\ \hline
Total                                                   & 0 & 0 & 0 & 1 & 0 & 2 & 0 & 3 & 1 & 4  & 1  & 7  & 1  & 8  & 3  & 12  & 4  & 16 & 6 & 21 \\ \hline
\end{tabular}
\end{table}

\caption{Hermitian eigenforms for discriminant $-8$}

\end{figure}

\begin{figure}
\begin{table}[H]
\begin{tabular}{|l|l|l|l|l|l|l|l|l|l|l|l|l|l|l|l|l|l|l|l|l|}
\hline
                                                                  & 1 & 2 & 3 & 4 & 5 & 6 & 7 & 8 & 9 & 10 & 11 & 12 & 13 & 14 & 15 & 16 & 17 & 18 & 19 & 20 \\ \hline
Eisenstein                                                        & 0 & 0 & 0 & 1 & 0 & 1 & 0 & 1 & 0 & 1  & 0  & 1  & 0  & 1  & 0  & 1  & 0  & 1 & 0 & 1 \\ \hline
Klingen                                                & 0 & 0 & 0 & 0 & 0 & 0 & 0 & 0 & 0 & 0  & 0  & 1  & 0  & 0  & 0  & 1  & 0  & 1 & 0 & 1 \\ \hline
$\Maass$                                                            & 0 & 0 & 0 & 0 & 0 & 1 & 0 & 2 & 0 & 3  & 0  & 4  & 0  & 5  & 0  & 6  & 0  & 7  & 0 & 8 \\ \hline
$\Maass(\mathrm{spin}_{11})$              & 0 & 0 & 0 & 0 & 1 & 0 & 1 & 0 & 3 & 0  & 3  & 0  & 4  & 0  & 5  & 0  & 6  & 0 & 6 & 0  \\ \hline
$\MW$                                                            & 0 & 0 & 0 & 0 & 0 & 0 & 0 & 0 & 0 & 0  & 0  & 0  & 0  & 0  & 0  & 0  & 0  & 1 & 0 & 1 \\ \hline
$\G$                                              & 0 & 0 & 0 & 0 & 0 & 0 & 0 & 0 & 0 & 1  & 0  & 2  & 0  & 4  & 0  & 7  & 0  & 11 & 0 & 17  \\ \hline
$\G(\mathrm{spin}_{11})$                      & 0 & 0 & 0 & 0 & 0 & 0 & 0 & 0 & 0 & 0  & 1  & 0  & 2  & 0  & 5  & 0  & 8  & 0 & 13 & 0 \\ \hline
$\G(\mathrm{det})$                         & 0 & 0 & 0 & 0 & 0 & 0 & 0 & 0 & 0 & 0  & 0  & 0  & 0  & 0  & 0  & 0  & 0  & 0 & 0 & 0 \\ \hline
$\G(\mathrm{spin}_{11} \otimes \mathrm{det})$ & 0 & 0 & 0 & 0 & 0 & 0 & 0 & 0 & 0 & 0  & 0  & 0  & 0  & 0  & 0  & 0  & 0  & 0 & 0 & 0 \\ \hline
$\color{red}\mathrm{Y}$ 				       & 0 & 0 & 0 & 0 & 0 & 0 & 0 & 0 & 0 & 0  & 0  & 0  & $\color{red} 1$  & 0  & 0  & 0  & $\color{red} 1$  & 0 & $\color{red} 1$ & 0 \\ \hline
Total                                                   & 0 & 0 & 0 & 1 & 1 & 2 & 1 & 3 & 3 & 5  & 4  & 8  & 6  & 10  & 10  & 15  & 14  & 21 & 19 & 28 \\ \hline
\end{tabular}
\end{table}

\caption{Hermitian eigenforms for discriminant $-11$}

\end{figure}

\clearpage

\section{Modular forms of small weight}

Additional evidence for Conjecture \ref{conj:main} comes from comparing modular forms of low weight as the discriminant varies.

Hermitian modular forms of weight four that do not belong to the Maass space are relatively rare for small discriminants. Therefore, even though no general formula for $\mathrm{dim}\, M_4(\Gamma_K)$ is known, we expect it to be close to the dimension $\mathrm{dim}\, J_4(\mathcal{O}_4)$ of the space of Jacobi forms which lift to the Maass space as long as $|\Delta_K|$ is not too large.

The following table shows that $\mathrm{dim}\, J_4(\mathcal{O}_K)$ is indeed very close to $\dim\fM_0(\Spin(\cL_K))$:

\begin{figure}[htbp]
\centering
\begin{tabular}{l*{14}{c}r}
\hline
$-\Delta$ & 3 & 4 & 7 & 8 & 11 & 15 & 19 & 20 & 23 & 24 & 31 & 35 & 39 \\
\hline
$\mathrm{dim}\, J_4(\mathcal{O}_K)$ & 1 & 1 & 1 & 1 & 1 & 1 & 2 & 1 & 1 & 2 & 2 & 2 & 2\\
\hline
$\dim\fM_0(\Spin(\cL_K))$ & 1 & 1 & 1 & 1 & 1 & 1 & 2 & 1 & 1 & 2 & 2 & 2 & 2 \\
\hline
\hline
$-\Delta$ & 40 & 43 & 47 & 51 & 52 & 55 & 56 & 59 & 67 & 68 & 71 & 79 & 83 \\
\hline
$\mathrm{dim}\, J_4(\mathcal{O}_K)$ & 3 & 4 & 2 & 4 & 4 & 3 & 3 & 4 & 6 & 4 & 3 & 5 & 6 \\
\hline
$\dim\fM_0(\Spin(\cL_K))$ & 3 & 4 & 2 & 4 & 4 & 3 & 3 & 4 & 6 & 4 & 3 & 5 & 6 \\
\hline
\hline
$-D$ & 84 & 87 & 88 & 91 & 95 & 103 & 104 & 107 & 111 & 115 & 116 & 120 & 123 \\
\hline
$\mathrm{dim}\, J_4(\mathcal{O}_K)$ & 5 & 5 & 7 & 7 & 4 & 7 & 6 & 8 & 6 & 8 & 7 & 7 & 10 \\
\hline
$\dim\fM_0(\Spin(\cL_K))$ & 5 & 5 & 7 & 8 & 4 & 7 & 7 & 8 & 6 & 9 & 7 & 7 & 10 \\
\hline
\hline
$-D$ & 127 & 131 & 132 & 136 & 139 & 143 & 148 & 151 & 152 & 155 & 159 & 163 & 164 \\
\hline
$\mathrm{dim}\, J_4(\mathcal{O}_K)$ & 9 & 9 & 9 & 10 & 11 & 7 & 12 & 10 & 10 & 10 & 9 & 14 & 10  \\
\hline
$\dim\fM_0(\Spin(\cL_K))$ & 9 & 10 & 9 & 12 & 13 & 8 & 12 & 11 & 11 & 11 & 10 & 16 & 11 \\
\hline
\hline
\end{tabular}    

\caption{Dimensions of Hermitian-Jacobi forms of weight 4 and algebraic modular forms of weight 0}

\end{figure}

Note that $J_4(\mathcal{O}_K)$ and $\mathfrak{M}_0(\Spin(\cL_K))$ both include Eisenstein series. The spaces of cusp forms are always one dimension smaller.

The table shows that $\mathrm{dim}\, \mathfrak{M}_0(\Spin(\cL_K))$ differs from $\mathrm{dim}\, J_4(\mathcal{O}_K)$ only for $-\Delta = 91, 104, 115, 131,...$ In these cases, there is a unique algebraic modular eigenform in the kernel of the theta map. These match the conjecture, however: \\

(1) $-\Delta = 91$ factors as $7 \cdot 13$. For the algebraic eigenform $F$ in the kernel of the theta map, E. Assaf computed the Euler factors of the standard $L$-function at small primes $p \ne 7, 13$ for us. Setting $X = p^{-s}$, the Euler factors decompose as follows:

\begin{align*} L_2(F; X) &= (1 + 3X + 16X^2) (1 + 6X + 16X^2) (1 - 4X)(1 + 4X); \\ L_3(F; X) &= (1 + 9X + 81X^2)(1 - 81X^2)(1 - 9X)^2; \\ L_5(F; X) &= (1 - 39X + 625X^2)(1 - 625X^2)^2; \\ L_{11}(F; X) &= (1 - 198X + 14641X^2)(1 + 66X + 14641X^2) (1 - 121X)(1 + 121X); \\ L_{17}(F; X) &= (1 + 569X + 83521X^2)(1 - 83521X^2)(1 - 289X)^2. \end{align*}

Besides the factors $(1 - p^2 X)$ and $(1 - \chi_7(p) p^2 X)$, where $\chi_7(n) = \left( \frac{n}{7} \right)$ is the quadratic residue symbol, there are factors $$1 - p a_p X + p^4 X^2, \quad 1 - b_p X + p^4 X^2,$$ where $a_p$ is the $p$th Fourier coefficient of $$\eta^3(\tau) \eta^3(7\tau) = q - 3q^2 + 5q^4 - 7q^7 - 3q^8 + 9q^9 - 6q^{11} \pm ...$$ and where $1 - b_p X + p^4 X^2$ are the Euler factors of the Asai $L$-function of the (unique) Hilbert cusp form of parallel weight $(3, 3)$ and level $1 \cdot \mathcal{O}_F$ attached to the number field $F = \mathbb{Q}(\sqrt{13})$. This strongly suggests that $F$ is a Yoshida lift, and we do not expect to find a corresponding Hermitian modular form. \\

(2) For $-\Delta = 104 = 8 \cdot 13$, the situation is similar: the kernel of the theta map is spanned by an eigenform that appears to be a Yoshida lift of the same Hilbert modular form and the level 8 eigenform discussed in \ref{conj:mainconj}. The missing eigenform for $-\Delta = 143 = 11 \cdot 13$ is also apparently a Yoshida lift. \\

(3) For $-\Delta = 115 = 5 \cdot 23$, one can use Borcherds products to exhibit a Hermitian cusp form of weight four that does not belong to the Maass space. For the theory of Borcherds products, specialized to Hermitian modular forms, we refer to \cite{Dern}. The discriminant group $\mathcal{O}_K' / \mathcal{O}_K \cong \mathbb{Z} / 115\mathbb{Z}$ is cyclic; denoting a generator by $\ell$, one can use the Sage package \cite{WeilRep} to show that there exists a nearly-holomorphic modular form for the appropriate Weil representation whose principal part at $\infty$ is: \begin{align*} &8 e_0 + 5q^{-1/115} (e_{\ell} + e_{-\ell}) + 4q^{-1/115} (e_{24\ell} + e_{-24\ell}) + 5 q^{-4/115} (e_{2\ell} + e_{-2\ell}) - q^{-6/115} (e_{11\ell} + e_{-11 \ell}) \\ &+ 4q^{-9/115} (e_{3\ell} + e_{-3\ell}) + q^{-16/115} (e_{4\ell} + e_{19\ell} + e_{-19\ell} + e_{-4\ell}) +  q^{-24/115} (e_{22\ell} + e_{-22\ell}) \\ &+ q^{-5/23} (e_{5\ell} + e_{-5\ell}) + q^{-29/115} (e_{57\ell} + e_{-57\ell}) + q^{-7/23} (e_{55\ell} + e_{-55\ell}).\end{align*}
The Borcherds product that arises from this is holomorphic of weight four and has vanishing first Fourier--Jacobi coefficient, hence cannot belong to the Maass space. This proves $\mathrm{dim}\, M_4(\Gamma_K) \ge 9$. \\

(4) For $-\Delta_K = 131$ we construct a Borcherds product of weight four by a similar method. If $\ell$ is a generator of the discriminant group $\mathcal{O}_K'/\mathcal{O}_K \cong \mathbb{Z}/131\mathbb{Z}$ then we find a nearly-holomorphic modular form whose principal part at $\infty$ is
\begin{align*} &8e_0 + 6q^{-1/131} (e_{\ell} + e_{-\ell})  + q^{-3/131} (e_{38\ell} + e_{-38\ell}) + 4q^{-4/131} (e_{2\ell} + e_{-2\ell}) + q^{-5/131} (e_{23\ell} + e_{-23\ell}) \\ &+ 3q^{-91/131} (e_{3\ell} + e_{-3\ell}) + q^{-12/131} (e_{55\ell} + e_{-55\ell}) + 2q^{-13/131} (e_{12\ell} + e_{-12\ell}) + 2q^{-15/131} (e_{43\ell} + e_{-43\ell}) \\ &+ q^{-16/131} (e_{4\ell} + e_{-4\ell}) + q^{-25/131} (e_{5\ell} + e_{-5\ell}) + q^{-33/131} (e_{65\ell} + e_{-65\ell}) \\ &+ q^{-35/131} (e_{64\ell} + e_{-64\ell}) + q^{-36/131} (e_{6\ell} + e_{-6\ell}).
\end{align*}
The Borcherds product it produces has weight four and vanishing first Fourier--Jacobi coefficient so it does not belong to the Maass space. \\

The discrepancies for larger $|\Delta_K|$ can apparently be explained similarly. For example, for $-\Delta = 136 = 8 \cdot 17$, there are two algebraic modular forms of weight $0$ that do not correspond to Maass lifts. One has spin character $\spin_{34}$ and is apparently a Yoshida lift (from a Hilbert cusp form of parallel weight $(3, 3)$ for $\mathbb{Q}(\sqrt{17})$ and the CM form of weight 3 and level 8); as for the other form, which has trivial spin character, one can use Borcherds products to show that there is a Hermitian cusp form, not belonging to the Maass space, of weight $4$ for the field $\mathbb{Q}(\sqrt{-136})$.

\end{document}